\def\NAT@def@citea{\def\@citea{\NAT@separator}}
\theoremstyle{plain}
\newtheorem{theorem}{Theorem}[section]
\newtheorem{lemma}[theorem]{Lemma}
\newtheorem{corollary}[theorem]{Corollary}
\theoremstyle{definition}
\newtheorem{definition}[theorem]{Definition}
\newtheorem{example}[theorem]{Example}
\theoremstyle{remark}
\begin{document}

\title{Solving a Class of Nonconvex Quadratic Programs by Inertial DC Algorithms}

\author{\name{Tran Hung Cuong\textsuperscript{a}, Yongdo Lim\textsuperscript{b}, Nguyen Nang Thieu\textsuperscript{c}, and Nguyen Dong Yen\textsuperscript{c}} \affil{\textsuperscript{a}Department of Computer Science, Faculty of Information Technology, Hanoi University of Industry, 298 Cau Dien, Hanoi, Vietnam; \textsuperscript{b} Department of Mathematics, Sungkyunkwan University, Suwon, South Korea; \textsuperscript{c} Institute of Mathematics, Vietnam Academy of Science and Technology, 18 Hoang Quoc Viet, Hanoi 10307, Vietnam}}

\maketitle

\begin{abstract} Two inertial DC algorithms for indefinite quadratic programs under linear constraints (IQPs) are considered in this paper. Using a qualification condition related to the normal cones of unbounded pseudo-faces of the polyhedral convex constraint set, the recession cones of the corresponding faces, and the quadratic form describing the objective function, we prove that the iteration sequences in question are bounded if the given IQP has a finite optimal value. Any cluster point of such a sequence is a KKT point. The convergence of the members of a DCA sequence produced by one of the two inertial algorithms to just one connected component of the KKT point set is also obtained. To do so, we revisit the inertial algorithm for DC programming of de Oliveira and Tcheou [de Oliveira, W.,  Tcheou, M.P.: \emph{An inertial algorithm for DC programming}, Set-Valued and Variational Analysis 2019; 27: 895--919] and give a refined version of Theorem~1 from that paper, which can be used for IQPs with unbounded constraint sets. An illustrative example is proposed.
\end{abstract} 

\begin{keywords} Indefinite quadratic program under linear constraints, DC decompositions, inertial DC algorithm, qualification condition, convergence theorem
\end{keywords}

\section{Introduction}\label{Sect_1}

\textit{Indefinite quadratic programs under linear constraints} (IQPs) form an important class of nonconvex optimization problems. The sequential quadratic programming methods (Wilson's method, Pang's method, the local Maratos-Mayne-Polak method, global MMP method, the Maratos-Mayne-Polak-Pang method, etc.; see~\cite[Section~2.9]{Polak_1997}) transform a given twice continuously differentiable nonlinear mathematical programming problem to a sequence of IQPs. Other theoretical aspects of IQPs can be seen in~\cite{Bomze_1998}. Applications of IQPs in agriculture, economics, production operations, marketing, and public policy have been discussed by Gupta~\cite{Gupta_1995}. The role of IQPs in finance, image enhancement, training support vector machines, and machine learning is well-known (see, e.g.,~\cite{Akoa_2008,Cornuejols_2018,Liu et al_2017a,McCarl et al_1977, Xu et al_2017,Xue et al_2019})  

Qualitative properties of IQPs (solution existence, structure of the solution set, necessary and sufficient optimality conditions, and stability) are addressed in~\cite{LeeTamYen_book} and the references therein. We refer to \cite{Bomze_Danninger_1994,Cambini_Sodini_2005,PhamDinh_LeThi_2,PhamDinh_LeThi98,PhamDinh_LeThi_3,PhamDinh_LeThi_Akoa,Ye89,Ye92,Ye97} for numerical methods to solve IQPs. Quadratic programming codes and quadratic programming test examples are available in~\cite{Gould_Toint}. Most of the known algorithms stop at \textit{stationary points} (also called the \textit{Karush-Kuhn-Tucker points}, or \textit{KKT points}), or \textit{local minimizers}. This means that these algorithms are local solution methods. Taking account of the NP-hardness of general IQPs~\cite{Pardalos_Vavasis_1991}, we believe that seeking global solutions to large-scale problems remains a difficult question.

Using the DC (Difference-of-Convex functions) algorithms due to Pham Dinh and Le Thi \cite{PhamDinh_LeThi_AMV97,PhamDinh_LeThi98} (see also \cite{LeThi_PhamDinh_AOR05,PhamDinh_LeThi_4}), Pham Dinh et al.~\cite{PhamDinh_LeThi_Akoa} proposed the  {\it Projection DC decomposition algorithm} and the {\it Proximal DC decomposition algorithm} to numerically solve IQPs. Convergence, stability, and the convergence rate of these algorithms have been studied in~\cite{ATY2,CLY_2024,Tuan_JMAA2015,Tuan_JOTA2015}. Among other things, it has been proved that if the objective function is bounded below on a nonempty constraint set, then every iteration sequence converges to a KKT point and the convergence rate is $Q$-linear. To speed up the computation and force the iteration sequences to converge to KKT points of higher quality (i.e., having smaller values of the objective function), one can apply the inertial algorithm for DC programming of de Oliveira and Tcheou~\cite{de Oliveira_Tcheou_2019}. However, since the boundedness of the sublevel set related to the chosen initial point is a principal assumption of Theorems~1 and~2 in~\cite{de Oliveira_Tcheou_2019}, these fundamental results cannot be used for IQPs with unbounded constraint sets. 

This paper is threefold. First, we revisit the inertial algorithm for DC programming of de Oliveira and Tcheou and propose a refined version of Theorem~1 from~\cite{de Oliveira_Tcheou_2019}, which applies to general IQPs. Second, we specialize the algorithm to IQPs by using the two DC decomposition methods of Pham Dinh et al.~\cite{PhamDinh_LeThi_Akoa}. As a result, we obtain the Inertial Projection DC algorithm and the Inertial Proximal DC algorithm, which are denoted respectively by \textbf{InDCA1} and \textbf{InDCA2}. If the inertial parameter is null, these algorithms collapse to the above-mentioned Projection DC decomposition algorithm and Proximal DC decomposition algorithm. By imposing a novel \textit{qualification condition} on the normal cones of unbounded pseudo-faces of the polyhedral convex constraint set, the recession cones of the corresponding faces, and the quadratic form describing the objective function, we prove that all iteration sequences generated either by~\textbf{InDCA1} or by~\textbf{InDCA2} are bounded, provided that the optimal value of the IQP in question is finite. So, each iteration sequence has a cluster point, which is a KKT point. The convergence of the members of a DCA sequence produced by each of the two inertial algorithms to just one connected component of the KKT point set is also established. The third aim of our paper is to analyze the efficiency of~\textbf{InDCA1} and~\textbf{InDCA2}, as well as the assertions of the obtained convergence theorems, via one concrete example.  

Our qualification condition for IQPs resembles the well-known \textit{general position condition} in optimal control theory of linear control systems~\cite[p.~116]{PBGM_1962}. 

The organization of the paper is as follows. Some preliminaries and auxiliary results are presented in Section~\ref{Preliminaries}. Then, in Section~\ref{Sect_3}, we consider two inertial DC algorithms for IQPs. Finally, in Section~\ref{Sect_4}, we obtain convergence theorems for the new algorithms and study a useful illustrative example.

\section{Preliminaries and Auxiliary Results}\label{Preliminaries}

The set of natural numbers is denoted by $\mathbb N$. The scalar product (resp., the norm) of two vectors $x,y\in\mathbb R^n$ (resp., of a vector  $x\in\mathbb R^n$) is denoted by $\langle x,y\rangle$ (resp., $\|x\|$). Let $B(x,\varepsilon)$ (resp., $\bar{B}(x,\varepsilon)$) stand for the open (resp., closed) ball with center $x$ and radius $\varepsilon>0$. The convention $(+\infty)-(+\infty)=+\infty$ will be used throughout the forthcoming. The distance from $x\in\mathbb R^n$ to a subset $\Omega$ of $\mathbb R^n$ is defined by $$d(x,\Omega)=\inf\{\|x-u\|\mid u\in\Omega\}.$$

We now recall some standard notions and notations from convex analysis and optimization theory, which can be found in~\cite{Bonnans_Shapiro_2000,Clarke_1990,Ioffe_Tihomirov_1979,Roc70} (see also~\cite{ CLY_2024, de Oliveira_Tcheou_2019} and the references therein). 
 
The \textit{normal cone} to a convex set $\Omega\subset\mathbb R^m$ at $x\in \Omega$ is the set 
$$N_\Omega(x):=\big\{\xi\in\mathbb R^n \mid \langle\xi,v-x\rangle\leq 0\ \; \forall v\in \Omega\big\}.$$

A nonzero vector $v\in\mathbb R^n$ is said to be a \textit{direction of recession}  of a nonempty convex set $\Omega\subset\mathbb R^n$ if $x+tv\in \Omega$ for every $t\geq 0$ and every $x\in \Omega.$ The set composed by $0\in\mathbb R^n$ and all the directions $v\in\mathbb R^n\setminus\{0\}$ satisfying the last condition, is called the \textit{recession cone} of $\Omega$ and denoted by $0^+\Omega.$  If $\Omega$ is closed and convex, then  $$0^+\Omega=\{v\in\mathbb R^n\mid \exists x\in \Omega\ \, {\rm such\ that}\ \, x+tv\in \Omega\ \, {\rm for\ all}\ \, t> 0\}.$$

For a function $f:\mathbb R^n\to \mathbb R\cup \{+\infty\}$, one defines the \textit{domain} of $f$ and the \textit{epigraph} of $f$ respectively by ${\rm dom}\,f=\big\{x\in\mathbb R\mid f(x)<+\infty\big\}$ and $${\rm epi}\,f=\{(x,\mu)\in \mathbb R^n\times \mathbb R\mid f(x)\leq\mu\}.$$
If $f$ is convex (i.e., ${\rm epi}\,f$ is convex), then the \textit{subdifferential} of $f$ at $\bar x\in {\rm dom}\,f$ is defined by 
$$\partial f(\bar x)=\big\{x^*\in\mathbb R^n\mid f(x)\geq f(\bar x)+\langle x^*,x-\bar x\rangle\ \;\forall x\in\mathbb R^n\big\}.$$ Given any $\varepsilon\geq 0$, one defines the \textit{$\varepsilon-$subdifferential} of $f$ at $\bar x\in {\rm dom}\,f$ by   
$$\partial_\varepsilon f(\bar x)=\big\{x^*\in\mathbb R^n\mid f(x)\geq f(\bar x)+\langle x^*,x-\bar x\rangle-\varepsilon\ \;\forall x\in\mathbb R^n\big\}.$$ For any $\bar x\notin {\rm dom}\,f$, one puts $\partial f(\bar x)=\emptyset$ and $\partial_\varepsilon f(\bar x)=\emptyset$. 

A convex function $f:\Omega\to\mathbb R$, where $\Omega\subset\mathbb R^n$ is a convex set, is said to be \textit{strongly convex} on  $\Omega$ with modulus $\rho>0$ if for any $x,y\in \Omega$ and $ t\in(0,1)$ one has	
\begin{equation}
	\label{sc}
	f\big[(1- t)x + t y\big] \leq (1- t) f(x) + t f(y) -\dfrac{\rho}{2} t(1- t)\|x-y\|^2.
\end{equation} 

Since the next characterization of strong convexity sometimes is employed as the definition of the latter concept (see, e.g.,~\cite[Assumption~A1]{de Oliveira_Tcheou_2019}), a detailed proof is given here for the clarity of our presentation.
 
\begin{lemma}
\label{prop_1}
Let $f:\Omega\to\mathbb R$, where $\Omega\subset\mathbb R^n$ is a nonempty open convex set, be a convex function. Then, $f$ is strongly convex on $\Omega$ with modulus $\rho>0$ if and only if for any $x,y\in \Omega$ and for any $x^*\in\partial f(x)$ one has
\begin{equation}
	\label{sc_1}
	f(y) \geq f(x)+\langle x^*,y-x\rangle +\dfrac{\rho}{2}\| y-x\|^2.
\end{equation}
\end{lemma}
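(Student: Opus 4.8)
The plan is to establish the two implications directly, using only the definition of the subdifferential together with the standard fact that a finite convex function on an open convex set admits a nonempty subdifferential at each point of that set. A unifying way to see the equivalence is to observe that both (\ref{sc}) and (\ref{sc_1}) are equivalent to convexity of the shifted function $g(x):=f(x)-\frac{\rho}{2}\|x\|^2$ (whose subgradients satisfy $\partial g(x)=\partial f(x)-\rho x$, since $\frac{\rho}{2}\|x\|^2$ is smooth), but I will argue the two directions by hand because the required estimates are short.

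To prove that (\ref{sc}) implies (\ref{sc_1}), fix $x,y\in\Omega$ and $x^*\in\partial f(x)$, and for $t\in(0,1)$ put $x_t=(1-t)x+ty=x+t(y-x)\in\Omega$. The subgradient inequality gives $f(x_t)\ge f(x)+t\langle x^*,y-x\rangle$, while (\ref{sc}) gives $f(x_t)\le(1-t)f(x)+tf(y)-\frac{\rho}{2}t(1-t)\|x-y\|^2$. Chaining these two estimates, dividing by $t>0$, and letting $t\to0^+$ yields $\langle x^*,y-x\rangle\le f(y)-f(x)-\frac{\rho}{2}\|x-y\|^2$, which is exactly (\ref{sc_1}).

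For the converse, assume (\ref{sc_1}). Fix $x,y\in\Omega$ and $t\in(0,1)$, and set $z=(1-t)x+ty$. By convexity $z\in\Omega$, and here the hypothesis that $\Omega$ is open becomes essential: it guarantees $\partial f(z)\neq\emptyset$, so I may select some $z^*\in\partial f(z)$. Writing (\ref{sc_1}) with base point $z$ at each of the two points $x$ and $y$, then multiplying the two inequalities by $(1-t)$ and $t$ respectively and adding, the terms containing $z^*$ cancel because $(1-t)(x-z)+t(y-z)=0$; and using $\|x-z\|^2=t^2\|x-y\|^2$ together with $\|y-z\|^2=(1-t)^2\|x-y\|^2$, the quadratic contribution collapses to $\frac{\rho}{2}t(1-t)\|x-y\|^2$. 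This reproduces (\ref{sc}).

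The only genuinely delicate point is the nonemptiness of $\partial f(z)$ at the intermediate point $z$ in the converse direction; this is precisely where openness of $\Omega$ is used, and without it the subdifferential could be empty at a boundary point and the argument would break down. The remaining ingredients, namely the algebraic identity $(1-t)\|x-z\|^2+t\|y-z\|^2=t(1-t)\|x-y\|^2$ and the one-sided limit $t\to0^+$, are routine and present no difficulty.
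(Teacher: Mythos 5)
Your proof is correct and follows essentially the same route as the paper's: the forward direction chains the subgradient inequality at $x$ with the strong convexity inequality, divides by $t$, and lets $t\to 0^+$; the converse picks $z^*\in\partial f(z)$ at the intermediate point $z=(1-t)x+ty$ (using openness of $\Omega$ and Rockafellar's Theorem~23.4 for nonemptiness), applies \eqref{sc_1} at $x$ and $y$, and combines with weights $(1-t)$ and $t$ so the $z^*$ terms cancel and the identity $(1-t)t^2+t(1-t)^2=t(1-t)$ collapses the quadratic terms. The only difference is your side remark about reducing to convexity of $f-\frac{\rho}{2}\|\cdot\|^2$, which you do not actually use.
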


\begin{proof}
Suppose that $f$ is strongly convex on $\Omega$ with modulus $\rho>0$. Take any $x,y\in\Omega$, $x^*\in\partial f(x)$, and $ t\in(0,1)$. It follows from~\eqref{sc} that
\begin{equation*}
	t f(y) - t f(x) \geq f\big[(1- t) x + t y\big] -f(x)+\dfrac{\rho}{2} t(1- t)\|x-y\|^2.
\end{equation*}
Since $f\big[(1- t) x + t y\big] -f(x)\geq \langle x^*,  t y -
t x\rangle$, we get
\begin{equation*}
	t f(y) - t f(x) \geq \langle x^*,  t y -
	t x\rangle +\dfrac{\rho}{2} t(1- t)\|x-y\|^2.
\end{equation*}
Dividing both sides of this inequality by $ t$ yields
\begin{equation*}
	f(y) - f(x) \geq  \langle x^*, y -
	x\rangle +\dfrac{\rho}{2}(1- t)\|x-y\|^2.
\end{equation*}
Hence, passing $t\to 0$, from the last inequality we obtain~\eqref{sc_1}.

Now suppose that~\eqref{sc_1} is satisfied for all $x,y\in \Omega$ and $x^*\in\partial f(x)$. By our assumptions, $\partial f(x)$ is nonempty for every $x\in\Omega$ (see~\cite[Theorem~23.4]{Roc70}). Fix any $x,y\in \Omega$, $x^*\in\partial f(x)$, and $ t\in (0,1)$. Put $z=(1- t)x+ t y$ and take $z^*\in\partial f(z)$. By~\eqref{sc_1} we have
$$f(x)-f(z) \geq \langle z^*,x-z\rangle +\dfrac{\rho}{2}\| x-z\|^2,$$
and
$$f(y)-f(z) \geq  \langle z^*,y-z\rangle +\dfrac{\rho}{2}\| y-z\|^2.$$
By multiplying both sides of the first inequality (resp., both sides of the second inequality) by $(1- t)$ (resp., by $t$), then adding the results, we obtain
$$\begin{array}{rl}
	& (1- t)\big[f(x)-f(z)\big] + t\big[f(y)-f(z)\big]\\ 
	& \geq \big\langle z^*,(1- t)(x-z)+ t (y-z)\big\rangle +\dfrac{\rho}{2}\left[(1- t)\| x-z\|^2 +t\|y-z\|^2\right].
\end{array}$$
Since $x-z=  t(x-y)$ and $y-z=(1- t)(y-x)$, this yields
$$(1- t)\big[f(x)-f(z)\big] + t\big[f(y)-f(z)\big] \geq \langle z^*,0\rangle +\dfrac{\rho}{2}\big[ (1-t)t^2+t(1- t)^2\big] \|x-y \|^2.$$
As $(1-t)t^2+t(1- t)^2=t(1-t)$, the last inequality implies~\eqref{sc}. 

The proof is complete.
\end{proof}

Given $s$ vectors $v^1,\dots,v^s$ in $\mathbb R^n$, we denote by ${\rm pos}\{v^1,\dots,v^s\}$ the closed convex cone generated by $v^1,\dots,v^s$, that is $${\rm pos}\{v^1,\dots,v^s\}= \Big\{v=\sum_{i=1}^s\lambda_iv^i\mid \lambda_i\geq 0\ \,{\rm for}\ \,i=1,\dots,s\Big\}.$$ 

The \textit{metric projection} of $u\in\mathbb R^n$ onto a nonempty closed convex subset $\Omega$ is denoted by $P_\Omega(u)$. Thus, $P_\Omega(u)\in \Omega$ and $\big\|u-P_\Omega(u)\big\|=\displaystyle\min_{x\in \Omega} \|u-x\|.$

By $I$ we denote the unit matrix in $\mathbb R^{n\times n}$. For a symmetric matrix $Q\in\mathbb R^{n\times n}$, its  \textit{eigenvalues} are ordered in the sequence $\lambda_1(Q)\leq ...\leq\lambda_n(Q)$ with counting multiplicities. 

\subsection{Inertial DC Algorithms}

Following de Olivera and Tcheou~\cite{de Oliveira_Tcheou_2019}, we consider nonconvex optimization problems of the form
\begin{equation}\label{DC_program_1}
\min\big\{f(x):=f_1(x)-f_2(x)\mid x\in\mathbb R^n\big\}
\end{equation} with $f_1, f_2 : \mathbb R^n \to \mathbb R\cup\{+\infty\}$ being such convex and possibly nonsmooth functions that there is an open and convex set $\Omega$ in $\mathbb R^n$ satisfying
\begin{equation}\label{domains}
	{\rm dom}\, f_1\subset\Omega\subset	{\rm dom}\, f_2.
\end{equation} Besides, it is assumed that the function $f_1$ is \textit{lower semicontinuous} on $\mathbb R^n$, i.e., for any $\bar x\in \mathbb R^n$ and for any $\varepsilon>0$, there exists a neighborhood $U$ of $\bar x$ such that $f_1(x)\geq f_1(\bar x)-\varepsilon$ for all $x\in U$. The latter condition is equivalent to the requirement saying that \textit{both sets ${\rm dom}\, f_1$ and ${\rm epi}\,f_1$ are closed}. 

If $f(\bar x)\in\mathbb R$ and there is a neighborhood $U$ of $\bar x$ such that $f(\bar x)\leq f(x)$ for all $x\in U$, then $\bar x$ is called a \textit{local solution} of~\eqref{DC_program_1}.  If $f(\bar x)\in\mathbb R$ and $f(\bar x)\leq f(x)$ for all $x\in \mathbb R^n$, then $\bar x$ is said to be a \textit{global solution} of~\eqref{DC_program_1}. One says that $\bar x\in\Omega$ is a \textit{stationary point} (resp., a \textit{critical point}) of~\eqref{DC_program_1} if $\partial f_2(\bar x)\subset \partial f_1(\bar x)$ (resp., $\partial f_1(\bar x)\cap \partial f_2(\bar x)\neq\emptyset$). By $\mathcal{S},\mathcal{S}_1,\mathcal{S}_2$, and $\mathcal{S}_3$, respectively, we denote the sets of the global solutions, the local solutions, the stationary points, and the critical points of~\eqref{DC_program_1}. The fundamental necessary optimality condition in DC programming (see~\cite{de Oliveira_Tcheou_2019, de Oliveira_2020} and the references therein) asserts that $\mathcal{S}_1\subset \mathcal{S}_2$. In addition, since $\partial f_2(x)\neq\emptyset$ for every $x\in\Omega$  by~\cite[Theorem~23.4]{Roc70}, it holds that
 \begin{equation}\label{basic_inclusions}
 	\mathcal{S}\subset\mathcal{S}_1\subset\mathcal{S}_2\subset \mathcal{S}_3\subset {\rm dom}\, f_1. 
 \end{equation} By constructing suitable examples, one can show that each inclusion in~\eqref{basic_inclusions} can be strict. If the function $f_2$ is Fr\'echet differentiable on $\Omega$, then $\partial f_2(x)=\{\nabla f_2(x)\}$ for all $x\in\Omega$. In that case, one has $\mathcal{S}_2=\mathcal{S}_3$.

The inertial DC algorithmic pattern suggested in~\cite{de Oliveira_Tcheou_2019} is as follows.

\medskip
\textbf{Algorithm~1 (The general algorithm)}

\noindent \rule[0.05cm]{14.6cm}{0.01cm}

1:\ \, Let $x^0\in {\rm dom}\, f_1$, ${\rm Tol}\geq 0, \lambda\in [0,1)$ and $\gamma\in \big[0,2^{-1}(1-\lambda)\rho\big)$ be given. Set $x^{-1}=x^0$

	2:\ \, \textbf{for} $k=0,1,2,\dots $ \textbf{do}
	
	3:\ \,\hskip0.5cm Set $d^k=\gamma(x^k-x^{k-1})$ and find $x^{k+1}\in\Omega$ such that
	\begin{equation}\label{eq(8)}
		\partial_{\varepsilon^{k+1}}f_1(x^{k+1})\cap \left[\partial f_2(x^k)+d^k\right]\neq\emptyset\ \; {\rm with}\ \; 0\leq\varepsilon^{k+1}\leq 2^{-1}\lambda\rho\|x^{k+1}-x^k\|^2
	\end{equation}
	
		4:\ \, \hskip0.5cm \textbf{if} $\|x^{k+1}-x^k\|\leq {\rm Tol}$ and $\|d^k\|\leq {\rm Tol}$ \textbf{then}
		
		5:\ \, \hskip1cm Stop and return $(x^k,f(x^k))$
		
		6:\ \, \hskip0.5cm  \textbf{end if}
		
		7:\ \, \textbf{end for}\\
\noindent \rule[0.05cm]{14.6cm}{0.01cm}

\medskip
If $f_1$ is continuously Fr\'echet differentiable on ${\rm dom}\,f_1$, then $\partial f_1(x)=\{\nabla f_1(x)\}$ for all $x\in {\rm dom}\,f_1$. In that case, the following alternative to condition~\eqref{eq(8)} has been proposed in~\cite{de Oliveira_Tcheou_2019}: Compute $x^{k+1}\in {\rm dom}\,f_1$ such that
\begin{equation}\label{eq(9)}
	\|\nabla f_1(x^{k+1})- \big(\xi^k_2+d^k\big)\|\leq 2^{-1}\lambda\rho\|x^{k+1}-x^k\| \ \; {\rm for\ some}\ \; \xi^k_2\in \partial f_2(x^k).
\end{equation}

Since sublevel sets of indefinite QP problems under linear constraints can be unbounded, the convergence theorem for Algorithm~1 in~\cite{de Oliveira_Tcheou_2019} is not completely suited for our research purposes. So, we have to slightly modify the formulation and proof of the just-mentioned fundamental result. 
 \textit{In what follows, if not otherwise stated, we take ${\rm Tol}=0$.}

\begin{theorem}\label{Thm_convergence_1} {\rm (cf.~\cite[Theorem~1]{de Oliveira_Tcheou_2019})} Consider Algorithm~1 and assume that $f_2$ is strongly convex on~$\Omega$ with a parameter $\rho>0$, $x^0\in {\rm dom}\, f_1$, $$\lambda\in [0,1),\ \;\gamma\in \big[0,2^{-1}(1-\lambda)\rho\big).$$ Put $\alpha=2^{-1}[(1-\lambda)\rho-\gamma]$, $\alpha_1=\alpha-2^{-1}\gamma$, and suppose that $f$ is bounded below on $\Omega$ by a constant~$\beta\in\mathbb R$. Then, the following assertions are valid.
\begin{itemize}
	\item[{\rm (a)}] For all $k\in\mathbb N$, 
	\begin{equation}\label{est1_ini}
		f(x^{k+1})+\alpha \|x^{k+1}-x^k\|^2\leq \left[f(x^k)+\alpha \|x^k-x^{k-1}\|^2\right]-\alpha_1\|x^k-x^{k-1}\|^2.
	\end{equation}
	\item[{\rm (b)}] It holds that
	\begin{equation}\label{est2_ini}
		0\leq\alpha_1\sum_{k=0}^\infty \|x^{k+1}-x^k\|^2\leq f(x^0)-\beta.
	\end{equation}
	\item[{\rm (c)}] One has $\lim\limits_{k\to\infty}\|x^{k+1}-x^k\|=0$.
	\item[{\rm (d)}] Any cluster point of the sequence $\{x^k\}$ belongs to the set $\mathcal{S}_3$ of the critical points of~\eqref{DC_program_1}.
\end{itemize}
\end{theorem}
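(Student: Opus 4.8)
The plan is to make part~(a) the engine of the whole proof and then read off (b)--(d) from it. For (a), I would start from the defining relation in Step~3: by~\eqref{eq(8)} there are $\xi_2^k\in\partial f_2(x^k)$ and $\xi^{k+1}\in\partial_{\varepsilon^{k+1}}f_1(x^{k+1})$ with $\xi^{k+1}=\xi_2^k+d^k$. Evaluating the $\varepsilon$-subdifferential inequality for $f_1$ at $x^{k+1}$ with the test point $y=x^k$ gives $f_1(x^{k+1})\le f_1(x^k)+\langle\xi^{k+1},x^{k+1}-x^k\rangle+\varepsilon^{k+1}$, while Lemma~\ref{prop_1} applied to the strongly convex $f_2$ at $x^k$ with the subgradient $\xi_2^k$ gives $-f_2(x^{k+1})\le-f_2(x^k)-\langle\xi_2^k,x^{k+1}-x^k\rangle-\tfrac{\rho}{2}\|x^{k+1}-x^k\|^2$. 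Adding the two, substituting $\xi^{k+1}-\xi_2^k=d^k=\gamma(x^k-x^{k-1})$, and using $\varepsilon^{k+1}\le 2^{-1}\lambda\rho\|x^{k+1}-x^k\|^2$, I would arrive at
\[
f(x^{k+1})\le f(x^k)+\gamma\langle x^k-x^{k-1},x^{k+1}-x^k\rangle-\tfrac{(1-\lambda)\rho}{2}\|x^{k+1}-x^k\|^2.
\]
Splitting the inertial cross term by Young's inequality, $\gamma\langle x^k-x^{k-1},x^{k+1}-x^k\rangle\le\tfrac{\gamma}{2}\|x^k-x^{k-1}\|^2+\tfrac{\gamma}{2}\|x^{k+1}-x^k\|^2$, and collecting terms with the identity $\alpha-\alpha_1=\tfrac{\gamma}{2}$ then produces exactly~\eqref{est1_ini}.

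For (b) and (c), I would set $\Phi_k:=f(x^k)+\alpha\|x^k-x^{k-1}\|^2$, so that~\eqref{est1_ini} becomes $\Phi_{k+1}\le\Phi_k-\alpha_1\|x^k-x^{k-1}\|^2$. The hypothesis $\gamma<2^{-1}(1-\lambda)\rho$ makes both $\alpha$ and $\alpha_1$ strictly positive. Summing the recursion over $k=0,\dots,N$ telescopes the left-hand side; using $x^{-1}=x^0$ (hence $\Phi_0=f(x^0)$) and $\Phi_{N+1}\ge f(x^{N+1})\ge\beta$ yields $\alpha_1\sum_{k=0}^{N}\|x^k-x^{k-1}\|^2\le f(x^0)-\beta$, and letting $N\to\infty$ (after reindexing, the $k=0$ term vanishing) gives~\eqref{est2_ini}. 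Part (c) is then immediate, since the general term of a convergent nonnegative series tends to $0$.

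For (d), take a subsequence $x^{k_j}\to\bar x$. Since $\partial_{\varepsilon^{k+1}}f_1(x^{k+1})\neq\emptyset$ forces $x^{k+1}\in{\rm dom}\,f_1$, all iterates lie in the closed set ${\rm dom}\,f_1\subset\Omega$, so $\bar x\in{\rm dom}\,f_1\subset\Omega$; moreover (c) gives $x^{k_j\pm1}\to\bar x$, $d^{k_j}\to0$, and $\varepsilon^{k_j+1}\to0$. Because $f_2$ is finite and convex on the open set $\Omega$ it is locally Lipschitz near $\bar x$, so $\{\xi_2^{k_j}\}$ is bounded; passing to a further subsequence, $\xi_2^{k_j}\to\xi^*$, and closedness of the graph of $\partial f_2$ gives $\xi^*\in\partial f_2(\bar x)$, whence also $\xi^{k_j+1}=\xi_2^{k_j}+d^{k_j}\to\xi^*$. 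The crux is to pass to the limit in $\xi^{k_j+1}\in\partial_{\varepsilon^{k_j+1}}f_1(x^{k_j+1})$: fixing $y$ in the inequality $f_1(y)\ge f_1(x^{k_j+1})+\langle\xi^{k_j+1},y-x^{k_j+1}\rangle-\varepsilon^{k_j+1}$, noting that the last two terms converge to $\langle\xi^*,y-\bar x\rangle$, and using the lower semicontinuity of $f_1$ to bound $\liminf_j f_1(x^{k_j+1})\ge f_1(\bar x)$, I would obtain $f_1(y)\ge f_1(\bar x)+\langle\xi^*,y-\bar x\rangle$ for every $y$, i.e. $\xi^*\in\partial f_1(\bar x)$. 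Hence $\xi^*\in\partial f_1(\bar x)\cap\partial f_2(\bar x)$ and $\bar x\in\mathcal{S}_3$. I expect this last limiting argument---together with the need to certify $\bar x\in\Omega$ so that the $f_2$-subgradients remain bounded---to be the main obstacle, whereas (a) is a direct computation once the Young split is chosen to match the constants $\alpha$ and $\alpha_1$.
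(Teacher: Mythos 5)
Your proposal is correct and follows essentially the same route as the paper: part (b) and (c) by telescoping the descent inequality and using the lower bound $\beta$, and part (d) by the same limiting argument (closedness of ${\rm dom}\,f_1$ to get $\bar x\in\Omega$, local boundedness and closed graph of $\partial f_2$ to extract $\xi^*\in\partial f_2(\bar x)$, and lower semicontinuity of $f_1$ to pass to the limit in the $\varepsilon$-subdifferential inequality). The only differences are minor: for (a) the paper simply cites Lemma~2 of de~Oliveira and Tcheou while you rederive it from Lemma~\ref{prop_1} and Young's inequality (your constants match exactly, since $\alpha-\alpha_1=\gamma/2$), and you treat only the rule~\eqref{eq(8)}, omitting the alternative rule~\eqref{eq(9)} for differentiable $f_1$, which the paper's proof of (d) also covers but which is not part of Algorithm~1 as stated.
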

\begin{proof} Clearly, the condition $\gamma\in \big[0,2^{-1}(1-\lambda)\rho\big)$ implies that $\alpha\geq\alpha_1>0$. Assertion~(a) follows from~\cite[Lemma~2]{de Oliveira_Tcheou_2019}. To prove assertion~(b), take an arbitrary positive integer $m$ and add the first $m$ inequalities given by~\eqref{est1_ini} side by side to get 
	$$f(x^m)+\alpha \|x^m-x^{m-1}\|^2\leq f(x^0)-\alpha_1\|x^1-x^0\|^2-\alpha_1\|x^2-x^1\|^2-\ldots-\alpha_1\|x^{m-1}-x^{m-2}\|^2.$$ This yields
	$$\alpha_1\sum\limits_{k=0}^{m}\|x^{k+1}-x^{k}\|^2\leq f(x^0)-f(x^m).$$
	Hence,
	\begin{equation}\label{est3_ini}
		0\leq \alpha_1\sum\limits_{k=1}^{m-1}\|x^k-x^{k-1}\|^2\leq f(x^0)-\beta.
	\end{equation} Passing~\eqref{est3_ini} to the limit as $m\to\infty$ gives~\eqref{est2_ini}.
	
	 Assertion~(c) is an immediate consequence of the convergence of the series $\sum\limits_{k=0}^\infty \|x^{k+1}-x^k\|^2$ in~\eqref{est2_ini}. 
	
	Finally, to justify assertion~(d), suppose that $\{x^{k'}\}$ is a subsequence of $\{x^k\}$ with $\lim\limits_{k'\to\infty} x^{k'}=\bar x$. From~\eqref{eq(8)} and~\eqref{eq(9)} we have $\{x^k\}\subset {\rm dom}\, f_1$. Hence, the closedness of ${\rm dom}\, f_1$ assures that $\bar x\in {\rm dom}\, f_1$. Therefore, by~\eqref{domains} we have $\bar x\in\Omega$. Let $\delta>0$ be such that $\bar B(\bar x,\delta)\subset\Omega$. By considering a subsequence of $\{x^{k'}\}$ (if necessary), we may assume that $\{x^{k'}\}\subset \bar B(\bar x,\delta)$. Since $\Omega$ is open and $\Omega\subset	{\rm dom}\, f_2$, by~\cite[Theorem~23.4]{Roc70} we know that $\partial f_2(x)$ is nonempty and bounded for every $x\in\Omega$. Combining this with the closedness of the subdifferential, we see that $\partial f_2(x)$ is nonempty and compact for every $x\in\Omega$.  Furthermore, according to~Corollary at page~35 and Proposition~2.2.7 in~\cite{Clarke_1990}, $\partial f_2(x)$ coincides with the Clarke subdifferential of the locally Lipschitz function $f_2$. So, the set-valued map $\partial  f_2:\Omega\rightrightarrows \mathbb R^n$ is upper semicontinuous by~\cite[Proposition~2.6.2]{Clarke_1990}. Therefore, by the preservation of compactness of upper semicontinuous set-valued maps having compact values, we can assert that the set $G:=\bigcup\limits_{x\in \bar B(\bar x,\delta)}\partial f_2(x)$ is compact. By~\eqref{eq(8)}, for each $k\in\mathbb R^n$ we can select $\xi_1^{k+1}\in 	\partial_{\varepsilon^{k+1}}f_1(x^{k+1})$ and $\xi_2^k\in  \partial f_2(x^k)$ such that 
	\begin{equation}\label{eq(8a)}
	\xi_1^{k+1}=\xi_2^k+d^k.
	\end{equation} If the rule~\eqref{eq(9)} applies, then for every index $k'$ we have 
\begin{equation}\label{eq(9a)}
	\|\nabla f_1(x^{k'+1}) - \big(\xi^{k'}_2+d^{k'}\big)\|\leq 2^{-1}\lambda\rho\|x^{k'+1}-x^{k'}\|,
\end{equation} where $\xi_2^{k'}\in\partial f_2(x^{k'})$. Since $\{\xi^{k'}_2\}\subset G$, by considering a subsequence of $\{x^{k'}\}$ (if necessary), we may assume that $\lim\limits_{k'\to\infty}\xi^{k'}_2=\bar\xi_2\in Q$. Then, by upper semicontinuity of the set-valued map $\partial  f_2:\Omega\rightrightarrows \mathbb R^n$ we get $\bar\xi_2\in \partial  f_2(\bar x)$. 

First, suppose that the rule~\eqref{eq(8)} applies. Then, the inclusion $\xi_1^{k'+1}\in \partial_{\varepsilon^{k'+1}}f_1(x^{k'+1})$ implies that
$$f_1(x)\geq f_1(x^{k'+1})+\langle\xi_1^{k'+1},x-x^{k'+1}\rangle-\varepsilon^{k'+1}\quad \forall x\in\mathbb R^n,\ \forall k'.$$ Thus, for each $x\in\mathbb R^n$ and for any $\varepsilon>0$, using the lower semicontinuity of $f_2$ at $\bar x$ and~\eqref{eq(8a)}, we have 
$$f_1(x)\geq \big[f_1(\bar x)-\varepsilon\big] +\langle \xi_2^{k'}+d^{k'},x-x^{k'+1}\rangle-\varepsilon^{k'+1}$$ for all $k'$ large enough. Hence, thanks to assertion~(c), passing the last inequality to limit as $k'\to\infty$ and remembering that $$0\leq\varepsilon^{k'+1}\leq 2^{-1}\lambda\rho\|x^{k'+1}-x^{k'}\|^2$$
yield $f_1(x)\geq \big[ f_1(\bar x)-\varepsilon\big] +\langle \bar\xi_2,x-\bar x\rangle.$ Since $\varepsilon$ was taken arbitrarily, it follows that 
$$f_1(x)\geq f_1(\bar x)+\langle \bar\xi_2,x-\bar x\rangle\quad \forall x\in\mathbb R^n.$$ Consequently, we have $\bar\xi_2\in\partial  f_1(\bar x)\cap \partial  f_2(\bar x)$. Thus, $\bar x\in \mathcal{S}_3$.

Now, suppose that the rule~\eqref{eq(9)} is effective. Since $f_1$ is continuously Fr\'echet differentiable on ${\rm dom}\,f_1$, letting $k'\to\infty$ and using assertion~(c), from~\eqref{eq(9a)} we obtain $\nabla f_1(\bar x)=\bar\xi_2$. This means that $\bar\xi_2\in\partial  f_1(\bar x)\cap \partial  f_2(\bar x)$; so, $\bar x\in \mathcal{S}_3$.

The proof is complete.
\end{proof}

\begin{corollary}\label{Corr1} In addition to the assumptions of Theorem~\ref{Thm_convergence_1}, suppose that the sublevel set $L_f(x^0):=\big\{x\in\Omega\mid f(x)\leq f(x^0)\big\}$ of $f$ is bounded. Then the iterative sequence $\{x^k\}$ generated by Algorithm~1 has at least one cluster point which is a critical point of~\eqref{DC_program_1}.
\end{corollary}
\begin{proof} Assertion~(a) of Theorem~\ref{Thm_convergence_1} implies that
\begin{equation*}\label{est1_ini}
		f(x^{k+1})+\alpha \|x^{k+1}-x^k\|^2\leq \left[f(x^0)+\alpha \|x^0-x^{-1}\|^2\right]-\alpha_1\|x^0-x^{-1}\|^2=f(x^0)
	\end{equation*} every $k\in\mathbb N$. So, the whole sequence $\{x^k\}$ is contained in $L_f(x^0)$. As the later set is bounded, $\{x^k\}$ must have at least one convergent subsequence. By assertion~(d) of Theorem~\ref{Thm_convergence_1}, the limit point of that subsequence is a critical point of~\eqref{DC_program_1}.	
\end{proof}

\begin{corollary}\label{Corr2} If the assumptions of Theorem~\ref{Thm_convergence_1} are satisfied, then Algorithm~1 stops after a finite number of iterations if one takes ${\rm Tol}>0$. \end{corollary}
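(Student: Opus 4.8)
The plan is to invoke assertion~(c) of Theorem~\ref{Thm_convergence_1}, which gives $\lim_{k\to\infty}\|x^{k+1}-x^k\|=0$, and to deduce from it that the two quantities monitored by the stopping test in lines~4--5 of Algorithm~1 both become arbitrarily small. The stopping rule fires as soon as $\|x^{k+1}-x^k\|\le {\rm Tol}$ and $\|d^k\|\le {\rm Tol}$ hold simultaneously, so it suffices to show that each of these inequalities holds for all sufficiently large $k$.

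First I would treat the displacement term. By assertion~(c), for the given ${\rm Tol}>0$ there is an index $K_1\in\mathbb N$ such that $\|x^{k+1}-x^k\|\le {\rm Tol}$ whenever $k\ge K_1$. Next I would treat the inertial term $d^k=\gamma(x^k-x^{k-1})$. Since $\|d^k\|=\gamma\|x^k-x^{k-1}\|$ and, again by assertion~(c) applied to the shifted index $k-1$, one has $\|x^k-x^{k-1}\|\to 0$, it follows that $\|d^k\|\to 0$; hence there is an index $K_2\in\mathbb N$ with $\|d^k\|\le {\rm Tol}$ for all $k\ge K_2$. When $\gamma=0$ this is immediate, as then $d^k=0$ for every $k$.

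Finally, taking $K=\max\{K_1,K_2\}$, both inequalities of the stopping test are in force for every $k\ge K$. Therefore the condition in line~4 is met at the latest at iteration $k=K$, and Algorithm~1 stops and returns $(x^K,f(x^K))$ after finitely many iterations.

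There is no genuine obstacle here: the statement is an elementary corollary of the limit established in part~(c) of Theorem~\ref{Thm_convergence_1}. The only point requiring a word of care is that the stopping test couples its two conditions by a conjunction, so one must control both $\|x^{k+1}-x^k\|$ and $\|d^k\|$ simultaneously rather than just one of them; the factorization $\|d^k\|=\gamma\|x^k-x^{k-1}\|$ is what reduces the second condition to the same limit.
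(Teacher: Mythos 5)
Your proof is correct and follows essentially the same route as the paper: both arguments invoke assertion~(c) of Theorem~\ref{Thm_convergence_1} together with the identity $d^k=\gamma(x^k-x^{k-1})$ to conclude that both stopping inequalities hold simultaneously for all sufficiently large $k$, forcing termination. The paper states this slightly more compactly (one index $\bar k$ beyond which the maximum of the two quantities is at most ${\rm Tol}$), but the content is identical.
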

\begin{proof} By assertion~(c) of Theorem~\ref{Thm_convergence_1} and by the equality $d^k=\gamma(x^k-x^{k-1})$, there exists a natural number $\bar k$ such that $\max\big\{\|x^{k+1}-x^k\|, \|d^k\|\big\}\leq {\rm Tol}$  for every $k\geq \bar k$. In particular, $\|x^{\bar k+1}-x^{\bar k}\|\leq {\rm Tol}$ and $\|\bar d^k\|\leq {\rm Tol}$. Therefore, Algorithm~1 stops at the step number $\bar k$, or earlier.   
\end{proof}

\subsection{Indefinite QP problems}

Consider \textit{the indefinite quadratic programming problem under linear constraints} (called the IQP for brevity):
\begin{eqnarray}\label{QP problem}
	\min\Big\{\dfrac{1}{2}x^TQx+q^Tx \mid Ax\geq b\Big\}, \end{eqnarray}
where $Q\in\mathbb R^{n\times n}$ and $A\in\mathbb  R^{m\times n}$ are given matrices, $Q$  is symmetric, $q\in\mathbb  R^n$ and $b\in\mathbb  R^m$ are given vectors, and the superscript $^T$ denotes matrix transposition. The constraint set of~\eqref{QP problem} is 
\begin{equation}\label{C}
C:=\big\{x\in\mathbb R^n\mid Ax\geq b\big\}.
\end{equation} Clearly, $C$ is convex and closed. Assume that $C\neq\emptyset$. Since $x^TQx$ is an indefinite quadratic form, the objective function $f(x)$ is nonconvex in general; hence (\ref{QP problem}) is a nonconvex optimization problem. 

For an index set $\alpha\subset \{1,\dots,m\}$, by $A_\alpha$ we denote the matrix composed by the rows $A_i$, $i\in\alpha$, of $A$. Similarly, $b_\alpha$ is the vector composed by the components $b_i$, $i\in\alpha$, of $b$. The {\it pseudo-face} of $C$ corresponding to $\alpha$ is the set $${\mathcal F}_\alpha:=\big\{x\in\mathbb R^n\mid A_\alpha x=b_\alpha,\ A_{\bar\alpha}x>b_{\bar\alpha}\big\},$$ where $\bar\alpha:=\{1,\dots,m\}\setminus\alpha$. The {\it face} of $C$ corresponding to ${\mathcal F}_\alpha$ is the set  $$F_\alpha:=\big\{x\in\mathbb R^n\mid A_\alpha x=b_\alpha,\ A_{\bar\alpha}x\geq b_{\bar\alpha}\big\}.$$ It is well-known and easily verified that, for any $x\in {\mathcal F}_\alpha$, one has $$N_C(x)={\rm pos}\{A_i^T\mid i\in\alpha\}.$$ This means that the normal cone operator $N_C(.):\mathbb R^n\rightrightarrows\mathbb R^n$ is constant on every pseudo-face of $C$. By $N_{{\mathcal F}_\alpha}$ we denote the normal cone to $C$ at any point $x\in {\mathcal F}_\alpha$ and we say, by abuse of terminology, that $N_{{\mathcal F}_\alpha}$ is the \textit{normal cone of the pseudo-face}~${\mathcal F}_\alpha$.

To solve~\eqref{QP problem} using Algorithm~1, we define
\begin{eqnarray}\label{components_IQP} f_1(x)=\Big[\frac{1}{2}x^TQ_1x+q^Tx\Big]+\delta_C(x)\ \; {\rm and}\ \; f_2(x)=\frac{1}{2}x^TQ_2x,
\end{eqnarray} where $Q=Q_{1}-Q_{2}$ with $Q_1$ and $Q_2$ being symmetric positive definite matrices, and $\delta_C(x)$ is the \textit{indicator function} of $C$ (i.e., $\delta_C(x)=0$ for $x\in C$ and $\delta_C(x)=+\infty$ for $x\notin C$). Later, we will see how to choose the matrices $Q_1$ and $Q_2$ in some appropriate manners. Clearly, $f(x):=\dfrac{1}{2}x^TQx+q^Tx+\delta_C(x)$ has the DC decomposition $f(x)=f_1(x)-f_2(x)$. Therefore, with these functions $f$, $f_1$, and $f_2$, the IQP in~\eqref{QP problem} is equivalent to the nonsmooth DC program~\eqref{DC_program_1}. Since ${\rm dom}\, f_1=C$ and ${\rm dom}\, f_2=\mathbb R^n$, condition~\eqref{domains} is satisfied with $\Omega:=\mathbb R^n$. Moreover, the function $f_1$ is lower semicontinuous on $\mathbb R^n$.
 
\begin{definition} {\rm For $x\in\mathbb R^n$, if there exists a multiplier $\lambda\in\mathbb R^m$  such that
		\begin{eqnarray*}\label{KKT_Point_Set}
			\begin{cases}Qx+q-A^T\lambda=0,\\ Ax\geq b,\ \; \lambda\geq 0,\ \; \lambda^T(Ax-b)=0,\end{cases}
		\end{eqnarray*} then $x$ is said to be a {\it Karush-Kuhn-Tucker point} (a {\it KKT point}) of the IQP.
} \end{definition}

Denote the \textit{KKT point set} of~\eqref{QP problem} by $C_*$ and observe (see, e.g.,~\cite{LeeTamYen_book}) that $x\in C_*$ if and only if $x$ is a solution of the \textit{affine variational inequality}
\begin{eqnarray}\label{AVI} x\in C,\quad \langle Qx+q,u-x\rangle\geq 0\ \; \forall u\in C.
\end{eqnarray} Note that condition~\eqref{AVI} means that $0\in  Qx+q+N_C(x)$, and this inclusion can be rewritten equivalently as $\nabla f_2(x)\in\partial f_1(x)$. Hence, the KKT point set $C_*$ of the IQP coincides with the stationary point set ${\mathcal S}_2$ of~\eqref{DC_program_1}. Moreover, since the function $f_2$ is Fr\'echet differentiable on~$\Omega$, specializing the inclusions in \eqref{basic_inclusions} to the IQP, we have   \begin{equation}\label{basic_inclusions_IQP}
\mathcal{S}\subset\mathcal{S}_1\subset C_*=\mathcal{S}_2= \mathcal{S}_3\subset C. 
\end{equation}

The following lemma describes the behavior of $f$ on $C_*$.

\begin{lemma}\label{Lemma 3} {\rm (\cite[Lemma 3.1]{Luo_Tseng_1992}; see also \cite[Lemma 2.2]{Tuan_JMAA2015})}\ \,
	Let $C_1, C_2, \cdots, C_r$ denote the connected components of $C_*$. Then we have
	$ C_*=\displaystyle\bigcup_{i=1}^r C_i,$
	and the following properties are valid.
	\begin{itemize}
	\item[{\rm (a)}] Each $C_i$ is the union of finitely many polyhedral convex sets.
	\item[{\rm (b)}] The closed sets $C_i$, $i=1,\ldots r$, are properly separated each from others, that is, there exists $\delta>0$ such that if  $i\neq j$ then
	\begin{equation}\label{positive_excess}
		d(x,C_j)\geq \delta\quad \forall x\in C_i.
	\end{equation}
	\item[{\rm (c)}] The objective function of~\eqref{QP problem} is constant on each $C_i$.
\end{itemize}
\end{lemma}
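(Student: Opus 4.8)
The plan is to combine the identification of $C_*$ with the solution set of the affine variational inequality~\eqref{AVI} with the fact, recorded just above the statement, that $N_C(\cdot)$ is constant on each pseudo-face and equals ${\rm pos}\{A_i^T\mid i\in\alpha\}$ there. Writing $\varphi(x):=\frac12 x^TQx+q^Tx$ for the objective of~\eqref{QP problem}, I would first cut $C_*$ into polyhedral slices indexed by the index sets $\alpha\subset\{1,\dots,m\}$. For each such $\alpha$ put
\begin{equation*}
M_\alpha:=\big\{x\in\mathbb R^n\mid A_\alpha x=b_\alpha,\ A_{\bar\alpha}x\geq b_{\bar\alpha},\ -(Qx+q)\in{\rm pos}\{A_i^T\mid i\in\alpha\}\big\}.
\end{equation*}
Sending each $x\in C_*$ to its active set $\alpha(x)=\{i\mid A_ix=b_i\}$ and using $N_C(x)={\rm pos}\{A_i^T\mid i\in\alpha(x)\}$ shows $x\in M_{\alpha(x)}$; conversely, for $x\in M_\alpha$ the inclusion $-(Qx+q)\in{\rm pos}\{A_i^T\mid i\in\alpha\}\subset N_C(x)$ gives $0\in Qx+q+N_C(x)$, hence $x\in C_*$. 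Thus $C_*=\bigcup_\alpha M_\alpha$.

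Each $M_\alpha$ is a polyhedral convex set: the condition $-(Qx+q)\in{\rm pos}\{A_i^T\mid i\in\alpha\}$ describes the projection onto the $x$-space of the polyhedron $\{(x,\lambda)\mid Qx+q+\sum_{i\in\alpha}\lambda_iA_i^T=0,\ \lambda\geq0\}$, and a linear image of a polyhedron is again a polyhedron (Fourier--Motzkin elimination, or~\cite[Theorem~19.3]{Roc70}); intersecting with the polyhedron $F_\alpha$ keeps it polyhedral. Since there are at most $2^m$ choices of $\alpha$, $C_*=\bigcup_\alpha M_\alpha$ is a finite union of polyhedral convex sets. Because each $M_\alpha$ is convex, hence connected, it is contained in a single connected component; therefore every $C_i$ is the union of the finitely many $M_\alpha$ it contains, which proves~(a). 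For~(b) I would use the elementary fact that two disjoint polyhedra $P,P'$ lie at positive distance: the Minkowski difference $P+(-P')$ is again a polyhedron, hence closed, and $0\notin P+(-P')$, so $d(P,P'):=\inf\{\|u-v\|\mid u\in P,\,v\in P'\}=d(0,P+(-P'))>0$. If $i\neq j$, then no slice of $C_i$ can meet a slice of $C_j$ (otherwise the two components would intersect), so each such pair of slices is disjoint and therefore at positive distance; minimizing over the finitely many pairs of slices gives $d(C_i,C_j)>0$, and $\delta:=\min_{i\neq j}d(C_i,C_j)>0$ yields~\eqref{positive_excess}.

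For~(c) I would run a monotonicity argument inside each slice. If $x,y\in C_*$ with $[x,y]\subset C_*$, set $z(t)=x+t(y-x)$ and test~\eqref{AVI} at $z(t)$ against $u=x$ and $u=y$; using $x-z(t)=-t(y-x)$ and $y-z(t)=(1-t)(y-x)$, the two inequalities force $\langle Qz(t)+q,\,y-x\rangle=0$ for $t\in(0,1)$, so $\frac{d}{dt}\varphi(z(t))=0$ and $\varphi(x)=\varphi(y)$. In particular $\varphi$ is constant on each convex slice $M_\alpha$; two slices that intersect share this constant, and since within a single component the ``intersection graph'' of the slices is connected (a splitting would, by the separation in~(b), disconnect the component), the common value propagates across the whole component, proving~(c).

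The part I expect to require the most care is the polyhedrality bookkeeping in the first two paragraphs---in particular verifying that $C_*$ is \emph{exactly} $\bigcup_\alpha M_\alpha$ and that each slice sits inside one component despite the pseudo-faces being only relatively open---together with the upgrade in~(c) from ``constant on each slice'' to ``constant on each component'' via connectedness of the intersection graph.
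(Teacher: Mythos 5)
The paper itself contains no proof of this lemma: it is imported verbatim from \cite[Lemma~3.1]{Luo_Tseng_1992} (see also \cite[Lemma~2.2]{Tuan_JMAA2015}), so there is no in-paper argument to compare against. What you have written is a correct, self-contained reconstruction of essentially the standard argument behind the cited result: the decomposition of $C_*$ into the finitely many polyhedral slices $M_\alpha$ (with polyhedrality of the multiplier elimination justified by \cite[Theorem~19.3]{Roc70}), the observation that each convex slice lies inside a single component so that each $C_i$ is a finite union of slices, the positive-distance fact for disjoint polyhedra via closedness of $P+(-P')$, and the two-sided test of~\eqref{AVI} along segments for part~(c) are all sound. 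As a bonus, your argument also establishes two facts the statement of the lemma takes for granted: that the number $r$ of components is finite, and that each $C_i$ is closed.

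Two small repairs are worth recording. First, the formula $N_C(x)={\rm pos}\{A_i^T\mid i\in\alpha\}$ that you quoted from the paper has a sign slip (for $C=\{x\mid Ax\geq b\}$ the normal cone at a point with active set $\alpha$ is generated by the vectors $-A_i^T$, $i\in\alpha$); combining it literally with $0\in Qx+q+N_C(x)$, as you did, makes your slices come out as $-(Qx+q)\in{\rm pos}\{A_i^T\mid i\in\alpha\}$, whereas consistency with the paper's own KKT definition ($Qx+q=A^T\lambda$, $\lambda\geq0$, $\lambda^T(Ax-b)=0$) requires $Qx+q\in{\rm pos}\{A_i^T\mid i\in\alpha\}$. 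This is inherited from the paper, not an error of yours, and flipping the sign changes nothing in the structure of your proof. Second, in part~(c) the parenthetical appeal to assertion~(b) is slightly misplaced: (b) concerns distinct components, while what you actually need is that a splitting of the slices of a single $C_i$ into two subfamilies with no pairwise intersections would exhibit $C_i$ as the union of two disjoint nonempty closed sets, contradicting its connectedness (the positive-distance fact used in your proof of~(b) also delivers this, but plain closedness and disjointness already suffice). With these cosmetic adjustments the proof is complete and correct.
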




Since $Q$ is symmetric, its eigenvalues are real. As noted by Cuong et al.~\cite{CLY_2024}, \textit{the smallest eigenvalue $\lambda_1(Q)$ and the largest eigenvalue $\lambda_n(Q)$ of $Q$ can be computed easily by some algorithm} (for instance, by the Newton-Raphson algorithm in~\cite{Stoer_Bulirsch_1980}) \textit{or software}. So, the next choices of $Q_1$ and $Q_2$, which were proposed by Pham Dinh et al.~\cite{PhamDinh_LeThi_Akoa}, can be easily done: 

(a) $Q_1:=\eta I$, $Q_2:=\eta I-Q$, where $\eta$ is a positive real value satisfying the condition $\eta>\lambda_n(Q)$;

(b) $Q_1:=Q+\eta I$, $Q_2:=\eta I$, where $\eta$ is a positive real value satisfying the condition $\eta>-\lambda_1(Q)$.

Both choices guarantee that $Q=Q_{1}-Q_{2}$, where the matrices $Q_1$ and $Q_2$ are symmetric and positive definite. Following~\cite{CLY_2024}, we call the number $\eta$ \textit{the decomposition parameter}. Using the subdifferential sum rule for convex functions in~\cite[Theorem~23.8]{Roc70}, from~\eqref{components_IQP} we get 
\begin{equation}\label{subdiff_IQP}
\partial f_1(x)=(Q_1x+q)+N_C(x)\ \; {\rm and}\ \; \partial f_2(x)=\{Q_2x\}
\end{equation} for all $x\in\mathbb R^n$.

If the choice~(a) is used, then by~\eqref{subdiff_IQP} we have
\begin{equation}\label{subdiff_IQP_(a)}
	\partial f_1(x)=(\eta x+q)+N_C(x)\ \; {\rm and}\ \; \partial f_2(x)=\{\eta x-Qx\}.
\end{equation} Meanwhile, if the choice~(b) is adopted, then~\eqref{subdiff_IQP} yields
\begin{equation}\label{subdiff_IQP_(b)}
\partial f_1(x)=[(Q+\eta I)x+q]+N_C(x)\ \; {\rm and}\ \; \partial f_2(x)=\{\eta x\}.
\end{equation}

 If we choose $\lambda=0$ for Algorithm~1, then $\varepsilon^k=0$ for all $k\in\mathbb N$ and condition~\eqref{eq(8)} becomes \begin{equation}\label{eq(8)_IQP}
\nabla f_2(x^k)+d^k\in \partial f_1(x^{k+1}).
\end{equation} Combining~\eqref{eq(8)_IQP} with~\eqref{subdiff_IQP_(a)} and the formula $d^k=\gamma \big(x^k-x^{k-1}\big)$ gives
$$\eta x^k-Qx^k+\gamma(x^k-x^{k-1})\in \big(\eta x^{k+1} +q\big)+N_C(x^{k+1}).$$ It follows that
\begin{equation}\label{basic(a)} \gamma \big(x^k-x^{k-1}\big)\in \eta \big(x^{k+1}-x^k\big)+\big(Qx^k+q\big)+N_C(x^{k+1}).
\end{equation}  Similarly, since $d^k=\gamma \big(x^k-x^{k-1}\big)$, a combination of~\eqref{subdiff_IQP_(b)} and~\eqref{eq(8)_IQP}  yields
$$\eta x^k+\gamma(x^k-x^{k-1})\in \Big[(Q+\eta I)x^{k+1} +q\Big]+N_C(x^{k+1}).$$ Therefore,
\begin{equation}\label{basic(b)} \gamma \big(x^k-x^{k-1}\big)\in \eta \big(x^{k+1}-x^k\big)+\big(Qx^{k+1}+q\big)+N_C(x^{k+1}).
\end{equation}  

\textit{From now on, we choose $\lambda=0$ for Algorithm~1.} 

\section{Two Inertial DC Algorithms for IQPs}\label{Sect_3}

The analysis in Subsection~2.2 gives us two inertial DC algorithms to solve the IQP in~\eqref{QP problem}, which are termed \textit{Inertial Projection DCA} and \textit{Inertial Proximal DCA}. These algorithms will be abbreviated respectively to \textbf{InDCA1} and \textbf{InDCA2}.

\subsection{Inertial Projection DCA}

Condition~\eqref{basic(a)} can be rewritten equivalently as
\begin{equation*}\label{AVI(a)} \left\langle \left[\Big(1+\frac{\gamma}{\eta}\Big)x^k-\frac{\gamma}{\eta}x^{k-1}-\frac{1}{\eta}\big(Qx^k+q\big)\right]-x^{k+1},x-x^{k+1} \right\rangle\leq 0\quad\forall x\in C.
\end{equation*} By the well-known characterization of the metric projection onto a closed convex set in~\cite[Chapter~I, Theorem~2.3]{KS1980} and the inclusion $x^{k+1}\in C$, this means that
 \begin{equation}\label{iteration_InDCA1} 
 	x^{k+1}=P_C\left(\Big(1+\frac{\gamma}{\eta}\Big)x^k-\frac{\gamma}{\eta}x^{k-1}-\frac{1}{\eta}\big(Qx^k+q\big)\right).
 \end{equation}

Thus, in combination with the decomposition $Q=Q_1-Q_2$ described by option~(a) in Subsection~2.2, Algorithm~1 leads to the following inertial algorithm for the IQP.

\medskip
\textbf{InDCA1}

\noindent \rule[0.05cm]{14.6cm}{0.01cm}

1:\ \, Choose $\eta>0$ satisfying $\eta>\lambda_n(Q)$. Put $\rho=\eta-\lambda_n(Q)$

2:\ \, Let $x^0\in C$, ${\rm Tol}\geq 0,$ and $\gamma\in \big[0,2^{-1}\rho\big)$ be given. Set $x^{-1}=x^0$

3:\ \, \textbf{for} $k=0,1,2,\dots $ \textbf{do}

4:\ \,\hskip0.5cm Set $d^k=\gamma(x^k-x^{k-1})$ and find $x^{k+1}\in C$ by formula~\eqref{iteration_InDCA1} 

5:\ \, \hskip0.5cm \textbf{if} $\|x^{k+1}-x^k\|\leq {\rm Tol}$ and $\|d^k\|\leq {\rm Tol}$ \textbf{then}

6:\ \, \hskip1cm Stop and return $(x^k,f(x^k))$

7:\ \, \hskip0.5cm  \textbf{end if}

8:\ \, \textbf{end for}\\
\noindent \rule[0.05cm]{14.6cm}{0.01cm}

\subsection{Inertial Proximal DCA}

Let $\eta>0$ be chosen according to rule~(b) in Subsection~2.2 and $\gamma$ be selected as in Algorithm~1 where $\lambda=0$. Consider the optimization problem
\begin{equation}\label{optim_(b)}
	\min\{\varphi(x)\mid x\in C\}
\end{equation} with 
\begin{equation*}\label{varphi} \varphi(x):=\left(\dfrac{1}{2}x^TQx+q^Tx\right)+\dfrac{\eta}{2}\|x\|^2-\eta \langle x^k,x\rangle -\gamma\langle x^k-x^{k-1},x\rangle\end{equation*} for $x\in\mathbb R^n$
and $C$ as in~\eqref{C}. Since $\eta>-\lambda_1(Q)$, we see that~\eqref{optim_(b)} is a strongly convex optimization problem with a closed and nonempty constraint set. Hence, it has a unique solution, which is defined by the condition $0\in \nabla\varphi(x)+N_C(x).$ Since the last inclusion can be rewritten equivalently as
\begin{equation*}\label{basic(b)_1} \gamma \big(x^k-x^{k-1}\big)\in \eta \big(x-x^k\big)+\big(Qx+q\big)+N_C(x), 
\end{equation*} we conclude that the point $x^{k+1}\in C$ satisfying~\eqref{basic(b)} coincides with the unique solution of~\eqref{optim_(b)}. Therefore, with the decomposition $Q=Q_1-Q_2$ described by option~(b) in Subsection~2.2, Algorithm~1 provides us with the next inertial algorithm for the IQP.

\medskip
\textbf{InDCA2}

\noindent \rule[0.05cm]{14.6cm}{0.01cm}

1:\ \, Choose $\eta>0$ satisfying $\eta>-\lambda_1(Q)$. Put $\rho=\eta+\lambda_1(Q)$

2:\ \, Let $x^0\in C$, ${\rm Tol}\geq 0,$ and $\gamma\in \big[0,2^{-1}\rho\big)$ be given. Set $x^{-1}=x^0$

3:\ \, \textbf{for} $k=0,1,2,\dots $ \textbf{do}

4:\ \,\hskip0.5cm Set $d^k=\gamma(x^k-x^{k-1})$ and solve the strongly convex quadratic program~\eqref{optim_(b)} to get the  unique solution denoted by $x^{k+1}\in C$ 

5:\ \, \hskip0.5cm \textbf{if} $\|x^{k+1}-x^k\|\leq {\rm Tol}$ and $\|d^k\|\leq {\rm Tol}$ \textbf{then}

6:\ \, \hskip1cm Stop and return $(x^k,f(x^k))$

7:\ \, \hskip0.5cm  \textbf{end if}

8:\ \, \textbf{end for}\\
\noindent \rule[0.05cm]{14.6cm}{0.01cm}

\section{Convergence Theorems}\label{Sect_4}

If $\inf\limits_{x\in C} f(x)$ is finite, then~\eqref{QP problem} has a global solution by the Frank-Wolfe theorem (see, e.g.,~\cite[Theorem~2.1]{LeeTamYen_book}); hence $C_*$ is nonempty.  Theorem~\ref{Thm_convergence_1} applies also to the case where $C$ is unbounded. However, in that case, we don't know whether the iteration sequence $\{x^k\}$ in question has a cluster point, or not. The following theorem provides a mild sufficient condition for the boundedness of all iteration sequences generated either by the algorithm~\textbf{InDCA1} or by the algorithm~\textbf{InDCA2}.

\begin{theorem}\label{thm_new1}
Suppose that the objective function of the indefinite quadratic program~\eqref{QP problem} is bounded below on $C$ and suppose that for any unbounded pseudo-face $\mathcal{F}$ of~$C$ one has
\begin{equation}\label{QC}
Q\Big((0^+F)\setminus\{0\}\Big)\cap (-N_{\mathcal F})=\emptyset,
\end{equation} where $N_{\mathcal F}$ is the normal cone of~$\mathcal{F}$ and~$0^+F$ is the recession cone of the face $F$ corresponding to~$\mathcal{F}$. Then, all iteration sequences generated either by~\textbf{InDCA1} or by~\textbf{InDCA2} are bounded. As a consequence, each iteration sequence has a cluster point, which is a KKT point of~\eqref{QP problem}. 
\end{theorem}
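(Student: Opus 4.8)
The plan is to deduce the statement from Theorem~\ref{Thm_convergence_1} and then to prove the boundedness of the iterates by a normalization argument that activates the qualification condition~\eqref{QC}. First I would check that Theorem~\ref{Thm_convergence_1} applies to each algorithm. Both \textbf{InDCA1} and \textbf{InDCA2} are instances of Algorithm~1 with $\lambda=0$ and the DC data~\eqref{components_IQP}, for which $f_2$ is a positive-definite quadratic form; hence the strong-convexity requirement and the step-size condition $\gamma\in[0,\rho/2)$ of Theorem~\ref{Thm_convergence_1} hold by construction. Moreover, since $f(x)=\tfrac12 x^TQx+q^Tx+\delta_C(x)$ equals $+\infty$ outside $C$, the assumed lower boundedness of the objective on $C$ gives a constant $\beta$ with $f\geq\beta$ on all of $\mathbb R^n$. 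Thus assertion~(c) yields $\|x^{k+1}-x^k\|\to0$ (so $\|d^k\|\to0$), while assertion~(d) together with the identification $\mathcal S_3=C_*$ from~\eqref{basic_inclusions_IQP} shows that every cluster point of $\{x^k\}$ is a KKT point of~\eqref{QP problem}. Consequently the theorem will follow once the boundedness of $\{x^k\}$ is established, because a bounded sequence in $\mathbb R^n$ always admits a cluster point.

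To prove boundedness I would argue by contradiction: suppose an iteration sequence $\{x^k\}$ is unbounded, and pick indices $k_j\uparrow\infty$ with $\|x^{k_j}\|\to\infty$. Each $x^{k_j}$ lies in exactly one pseudo-face of $C$, and there are at most $2^m$ pseudo-faces, so after passing to a subsequence I may assume that all $x^{k_j}$ belong to a single pseudo-face $\mathcal F_\alpha$; then $N_C(x^{k_j})=N_{\mathcal F_\alpha}$ for every $j$. Setting $v^j:=x^{k_j}/\|x^{k_j}\|$ and passing to a further subsequence, I may assume $v^j\to\bar v$ with $\|\bar v\|=1$. Because $A_\alpha x^{k_j}=b_\alpha$ and $A_{\bar\alpha}x^{k_j}\geq b_{\bar\alpha}$, dividing by $\|x^{k_j}\|\to\infty$ and letting $j\to\infty$ gives $A_\alpha\bar v=0$ and $A_{\bar\alpha}\bar v\geq0$; that is, $\bar v\in(0^+F_\alpha)\setminus\{0\}$, and in particular $\mathcal F_\alpha$ is unbounded.

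Next I would extract the normal-cone information from the iteration. Applying~\eqref{basic(a)} (for \textbf{InDCA1}) or~\eqref{basic(b)} (for \textbf{InDCA2}) with $k+1=k_j$, there is a vector $w^j\in N_{\mathcal F_\alpha}$ with
\begin{equation*}
w^j=\gamma\big(x^{k_j-1}-x^{k_j-2}\big)-\eta\big(x^{k_j}-x^{k_j-1}\big)-\big(Qx^{\star}+q\big),
\end{equation*}
where $x^{\star}=x^{k_j-1}$ in the \textbf{InDCA1} case and $x^{\star}=x^{k_j}$ in the \textbf{InDCA2} case. Dividing by $\|x^{k_j}\|$ and letting $j\to\infty$, the two difference terms vanish by assertion~(c), the term $q/\|x^{k_j}\|$ vanishes, and $Qx^{\star}/\|x^{k_j}\|\to Q\bar v$ (in the \textbf{InDCA1} case one also uses $\|x^{k_j}-x^{k_j-1}\|\to0$ to get $x^{k_j-1}/\|x^{k_j}\|\to\bar v$). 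Since $N_{\mathcal F_\alpha}$ is a closed convex cone, $w^j/\|x^{k_j}\|\in N_{\mathcal F_\alpha}$ for every $j$, and passing to the limit gives $-Q\bar v\in N_{\mathcal F_\alpha}$. Hence $Q\bar v\in Q\big((0^+F_\alpha)\setminus\{0\}\big)\cap(-N_{\mathcal F_\alpha})$, contradicting the qualification condition~\eqref{QC} for the unbounded pseudo-face $\mathcal F_\alpha$. This contradiction proves that $\{x^k\}$ is bounded, and the existence of a KKT cluster point then follows from assertions~(c)--(d) and~\eqref{basic_inclusions_IQP}.

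The delicate point, and the step I expect to require the most care, is the passage to the limit in the normalized iteration: one must simultaneously verify that the normalized iterates converge to a genuine nonzero recession direction of the face $F_\alpha$ and that the scaled residuals $w^j/\|x^{k_j}\|$ remain in, and converge inside, the closed cone $-N_{\mathcal F_\alpha}$. This hinges on combining three facts cleanly: the constancy of $N_C(\cdot)$ on each pseudo-face (so that a single closed cone is seen along the whole subsequence), the positive homogeneity and closedness of that cone (so that dividing by $\|x^{k_j}\|$ preserves membership and the limit stays in the cone), and the vanishing of the inertial and step differences guaranteed by assertion~(c) of Theorem~\ref{Thm_convergence_1} (so that only the $Q\bar v$ term survives in the limit).
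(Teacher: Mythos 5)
Your proposal is correct and follows essentially the same argument as the paper's proof: assume the sequence is unbounded, pass to a subsequence lying in a single pseudo-face, write the iteration inclusion \eqref{basic(a)} (resp.\ \eqref{basic(b)}), divide by the norm of the iterates, use assertion~(c) of Theorem~\ref{Thm_convergence_1} to make the inertial and step-difference terms vanish, and conclude $-Q\bar v\in N_{\mathcal F}$ with $\bar v$ a unit recession direction, contradicting \eqref{QC}. The only (harmless) difference is that you establish $\bar v\in 0^+F_\alpha$ directly from the linear constraint description of the face, whereas the paper obtains it via an auxiliary point $u\in F$ and \cite[Lemma~2.10]{Huong_Yao_Yen_JOGO2020}.
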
 
\begin{proof} First, suppose that $\{x^k\}$ is an iteration  sequence generated by the algorithm~\textbf{InDCA1}. If $\{x^k\}$ is unbounded, then by considering a subsequence (if necessary), we may assume that $\lim\limits_{k\to\infty}\|x^k\|=+\infty$. Since the number of pseudo-faces of~$C$ is finite, there exists a subsequence $\{k'\}$ of $\{k\}$ and a pseudo-face~$\mathcal{F}$ such that $\{x^{k'+1}\}\subset\mathcal{F}$. As~\eqref{iteration_InDCA1} is equivalent to~\eqref{basic(a)}, one has for each $k'$ the inclusion \begin{equation}\label{basic(a)_1} \gamma \big(x^{k'}-x^{k'-1}\big)\in \eta \big(x^{k'+1}-x^{k'}\big)+\big(Qx^{k'}+q\big)+N_{\mathcal F},
	\end{equation} where $N_{\mathcal F}=N_C(x^{k'+1})$ is the normal cone of~$\mathcal{F}$. Without loss of generality we may assume that $\lim\limits_{k'\to\infty}\dfrac{x^{k'+1}}{\|x^{k'+1}\|}=v$ with $\|v\|=1$. Select a point $u\in F$, where $F$ is the face corresponding to~$\mathcal{F}$, such that $u\neq x^{k'+1}$ for all $k'$. Since
$$\lim\limits_{k'\to\infty}\dfrac{x^{k'+1}-u}{\|x^{k'+1}-u\|}=\lim\limits_{k'\to\infty}\left(\dfrac{x^{k'+1}}{\|x^{k'+1}\|}. \dfrac{\|x^{k'+1}\|}{\|x^{k'+1}-u\|}-\dfrac{u}{\|x^{k'+1}-u\|}\right)=\lim\limits_{k'\to\infty}\dfrac{x^{k'+1}}{\|x^{k'+1}\|},$$ using~\cite[Lemma~2.10]{Huong_Yao_Yen_JOGO2020}
we can assert that $v\in 0^+F$. Divide both 
sides of~\eqref{basic(a)_1} by $\|x^{k'+1}\|$ and get  \begin{equation*}\gamma \dfrac{x^{k'}-x^{k'-1}}{\|x^{k'+1}\|}\in \eta \dfrac{x^{k'+1}-x^{k'}}{\|x^{k'+1}\|}+\left(Q\Big(\dfrac{x^{k'}}{\|x^{k'+1}\|}\Big)+\dfrac{q}{\|x^{k'+1}\|}\right)+N_{\mathcal F}.
\end{equation*} Since $\lim\limits_{k\to\infty}\|x^{k+1}-x^k\|=0$ by assertion~(c) of Theorem~\ref{Thm_convergence_1}, passing the last inclusion to limit as $k'\to\infty$ yields $0\in Qv+N_{\mathcal F}$. We have arrived at a contradiction, because this contradicts the assumption~\eqref{QC}. The boundedness of $\{x^k\}$ has been proved.

Now, let $\{x^k\}$ be an iteration sequence generated by the algorithm~\textbf{InDCA2}. If $\{x^k\}$ is unbounded, then there exists a subsequence $\{k'\}$ of $\{k\}$ and a pseudo-face~$\mathcal{F}$ such that $\{x^{k'+1}\}\subset\mathcal{F}$ and $\lim\limits_{k\to\infty}\|x^{k'+1}\|=+\infty$. Getting from~\eqref{basic(b)} the inclusion \begin{equation*}\label{basic(b)_1} \gamma \big(x^{k'}-x^{k'-1}\big)\in \eta \big(x^{k'+1}-x^{k'}\big)+\big(Qx^{k'+1}+q\big)+N_{\mathcal F}
\end{equation*} with $N_{\mathcal F}=N_C(x^{k'+1})$ being the normal cone of~$\mathcal{F}$, and arguing similarly as above, we can reach a contradiction again. Thus, the sequence $\{x^k\}$ must be bounded.

The second assertion of the theorem is immediate from the first one and assertion~(d) of Theorem~\ref{Thm_convergence_1}.
\end{proof}

Our second theorem on the convergence of the algorithms~\textbf{InDCA1} and~\textbf{InDCA2} can be formulated as follows. 

\begin{theorem}\label{thm_new2}
	 Under the assumptions of Theorem~\ref{thm_new1}, for each iteration sequence $\{x^k\}$ generated either by~\textbf{InDCA1} or by~\textbf{InDCA2}, there exists a connected component $C_{i_0}$ of $C_*$ such that
	 \begin{equation}\label{distance}
	 \lim\limits_{k\to\infty} d\big(x^k,C_{i_0}\big)=0.
	 \end{equation} Therefore, the objective function of~\eqref{QP problem} has the same value at all the cluster points of $\{x^k\}$. 
\end{theorem}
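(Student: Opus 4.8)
The plan is to combine three facts already established in the excerpt: (i) the iteration sequence $\{x^k\}$ is bounded and every cluster point is a KKT point, i.e.\ lies in $C_*$ (Theorem~\ref{thm_new1}); (ii) consecutive iterates get arbitrarily close, $\lim_{k\to\infty}\|x^{k+1}-x^k\|=0$ (assertion~(c) of Theorem~\ref{Thm_convergence_1}); and (iii) the connected components $C_1,\dots,C_r$ of $C_*$ are finite in number, closed, and properly separated by a uniform gap $\delta>0$ (Lemma~\ref{Lemma 3}(b)). The key intuition is that a bounded sequence whose steps shrink to zero cannot ``jump'' across the separation gap between distinct components, so it must eventually be trapped near a single component.

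\emph{First} I would show that the whole sequence clusters into the union $C_*$: since $\{x^k\}$ is bounded, $d(x^k,C_*)\to 0$. Indeed, if not, some subsequence stays a fixed positive distance from $C_*$; by boundedness it has a convergent sub-subsequence whose limit is a cluster point of $\{x^k\}$, hence a KKT point in $C_*$ by Theorem~\ref{thm_new1}, contradicting the positive distance. \emph{Second}, using the uniform gap $\delta$ from Lemma~\ref{Lemma 3}(b), I would fix $\varepsilon:=\delta/3$ (say) and choose $N_1$ so that $d(x^k,C_*)<\varepsilon$ for all $k\ge N_1$; then each such $x^k$ lies within $\varepsilon$ of exactly one component, which defines an index-valued map $k\mapsto i(k)\in\{1,\dots,r\}$. \emph{Third}, I would use the vanishing step size to choose $N_2$ so that $\|x^{k+1}-x^k\|<\delta/3$ for $k\ge N_2$, and argue that the index $i(k)$ cannot change once $k\ge\max\{N_1,N_2\}$: if $x^k$ is within $\delta/3$ of $C_{i(k)}$ and $x^{k+1}$ is within $\delta/3$ of $C_{i(k+1)}$ with $i(k)\ne i(k+1)$, then the triangle inequality forces $d(C_{i(k)},C_{i(k+1)})\le \|x^{k+1}-x^k\|+2\varepsilon<\delta$, contradicting~\eqref{positive_excess}. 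Hence $i(k)$ is eventually constant, equal to some $i_0$, and $d(x^k,C_{i_0})=d(x^k,C_*)\to 0$, which is~\eqref{distance}.

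\emph{Finally}, for the last sentence of the statement, I would invoke Lemma~\ref{Lemma 3}(c): the objective $f$ is constant on each $C_i$, say $f\equiv c_{i_0}$ on $C_{i_0}$. Every cluster point $\bar x$ of $\{x^k\}$ is a limit of a subsequence $x^{k_j}$ with $d(x^{k_j},C_{i_0})\to 0$; since $C_{i_0}$ is closed we get $\bar x\in C_{i_0}$, and by continuity of $f$ on $C$ together with the constancy on $C_{i_0}$ we conclude $f(\bar x)=c_{i_0}$. Thus $f$ takes the single value $c_{i_0}$ at all cluster points.

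\emph{The main obstacle} is essentially the second-and-third step bookkeeping: making the ``cannot switch components'' argument airtight requires the three constants to be coordinated correctly (the proper-separation $\delta$ must dominate both the projection tolerance and the step size simultaneously), and one must verify that the nearest-component index is well defined, i.e.\ that once $d(x^k,C_*)<\delta/3$ the point is within the tolerance of a \emph{unique} component—which again follows from~\eqref{positive_excess} since two components within $\delta/3$ of a common point would be within $2\delta/3<\delta$ of each other. No step is technically deep, but the argument is entirely a consequence of the geometric separation in Lemma~\ref{Lemma 3} and the asymptotic regularity in Theorem~\ref{Thm_convergence_1}, so the proof should be short once the constants are pinned down.
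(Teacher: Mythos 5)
Your proposal is correct and follows essentially the same route as the paper's proof: both combine boundedness of $\{x^k\}$ with cluster points in $C_*$ (Theorem~\ref{thm_new1}), the vanishing step size from assertion~(c) of Theorem~\ref{Thm_convergence_1}, and the proper separation~\eqref{positive_excess} of the components from Lemma~\ref{Lemma 3} to trap the tail of the sequence near a single component via a triangle-inequality ``no jumping'' argument. The only difference is bookkeeping: the paper works with open $\delta/4$-tubes $\Omega_i$ around the $C_i$ and proves the limit~\eqref{distance} by a separate contradiction argument (its Claim~2), whereas you first establish $d(x^k,C_*)\to 0$ and then identify $d(x^k,C_{i_0})=d(x^k,C_*)$ for large $k$, which is a slightly more streamlined way to finish the same proof.
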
 
\begin{proof} Suppose that $\{x^k\}$ is an iteration sequence generated either by~\textbf{InDCA1} or by~\textbf{InDCA2}. Let $C_1, C_2, \cdots, C_r$ be the connected components of $C_*$ (see Lemma~\ref{Lemma 3}). Choose $\delta>0$ such that~\eqref{positive_excess} holds. For each $i\in\{1,\ldots,r\}$, since the distance function $d(.,C_i)$ satisfies the global Lipschitz condition $$\big|d\big(x,C_i\big)-d\big(u,C_i\big)\big|\leq \|x-u\|\quad \forall x,u\in\mathbb R^n$$ (see~\cite[Proposition~2.4.1]{Clarke_1990}), it is continous on $\mathbb R^n$. Hence, the sets \begin{equation}\label{Omega_i}
	\Omega_i:=\left\{x\in\mathbb R^n\mid d\big(x,C_i\big) <\dfrac{\delta}{4}\right\}\quad (i=1,\ldots,r)
	\end{equation} are open. 
	
	Next, using assertion~(c) of Theorem~\ref{Thm_convergence_1}, one can find an index $k_1\in\mathbb N$ such that \begin{equation}\label{k1} \|x^{k+1}-x^k\|<\dfrac{\delta}{4}\quad\;  \forall k\geq k_1.
	\end{equation}
	
	Furthermore, the sequence $\{x^k\}$ is bounded by Theorem~\ref{thm_new1}. If there are infinitely many members of the sequence lying in the set $D:=\mathbb R^n\setminus \left(\bigcup_{i=1}^r\Omega_i\right)$, then there exists a subsequence of~$\{x^k\}$ converging to a point $\bar x$. Thanks to assertion~(d) of Theorem~\ref{Thm_convergence_1} and~\eqref{basic_inclusions_IQP}, we can assert that $\bar x\in C_*$. But this is impossible because $D$ is closed and $C_*\subset \bigcup_{i=1}^r\Omega_i$. We have thus proved that there are only finitely many members of the sequence $\{x^k\}$ lying outside the union $\bigcup_{i=1}^r\Omega_i$. So, by considering a larger index $k_1$ (if necessary), we can assume that \begin{equation}\label{property1} x^k\in \bigcup_{i=1}^r\Omega_i\quad\; \forall k\geq k_1.
	\end{equation} Clearly, this assures that $x^{k_1}\in\Omega_{i_0}$ for some $i_0\in\{1,\ldots,r\}$.

\smallskip
{\sc Claim 1.} \textit{One has $\{x^k\}_{k\geq k_1}\subset \Omega_{i_0}$.}

\smallskip
Indeed, if $x^{k_1+1}\notin \Omega_{i_0}$, then by~\eqref{property1} we can find $i_1\in\{1,\ldots,r\}$ with $i_1\neq i_0,$ such that $x^{k_1+1}\in\Omega_{i_1}$. From~\eqref{Omega_i} it follows that $\|x^{k_1+1}-u\|<\dfrac{\delta}{4}$ for some $u\in C_{i_1}$. Then, by~\eqref{Omega_i} and~\eqref{k1} we have 
$$d\big(u,C_{i_0}\big)\leq \|u-x^{k_1+1}\|+\|x^{k_1+1}-x^{k_1}\|+d\big(x^{k_1},C_{i_0}\big)<\dfrac{3\delta}{4}.$$ This contradicts~\eqref{positive_excess}. We have thus shown that $x^{k_1+1}\in\Omega_{i_0}$. Now, letting $x^{k_1+1}$ play the role of the above element $x^{k_1}$, we can repeat the reasoning to prove that $x^{k_1+2}\in\Omega_{i_0}$, and so on. Therefore, the property $\{x^k\}_{k\geq k_1}\subset \Omega_{i_0}$ holds true.
	
 The sets $\Omega_1,\ldots,\Omega_r$ are pairwise distinct. Indeed, suppose for a while that there exists a point $y\in \Omega_{i_1}\cap \Omega_{i_2}$, where $i_1\neq i_2$. Then, by~\eqref{Omega_i} we can find $x\in C_{i_1}$ and $u\in C_{i_2}$ such that $\|y-x\|<\dfrac{\delta}{4}$ and $\|y-u\|<\dfrac{\delta}{4}$. Then, we get $$\dfrac{\delta}{2}>\|y-x\|+\|y-u\|\geq\|x-u\|\geq d(x,C_{i_2}).$$ This certainly contradicts~\eqref{positive_excess}.  
 
 \smallskip
 {\sc Claim 2.} \textit{For the above-chosen index $i_0\in \{1,\ldots,r\}$, the equality~\eqref{distance} holds.}
 
 \smallskip 
 Indeed, if~\eqref{distance} was false, then we would find a number $\varepsilon>0$ and a convergent subsequence $\{x^{k'}\}$ of $\{x^k\}$ such that \begin{equation}\label{property2} d\big(x^{k'},C_{i_0}\big)\geq\varepsilon\quad\; \forall k'.	\end{equation} 
 Denote the limit point of $\{x^{k'}\}$ by $\bar x$ and get $\bar x\in C_*$ by assertion~(d) of Theorem~\ref{Thm_convergence_1} and~\eqref{basic_inclusions_IQP}. Passing the inequality in~\eqref{property2} to limit as $k'\to\infty$ gives $d\big(\bar x,C_{i_0}\big)\geq\varepsilon$. In particular, $\bar x\notin C_{i_0}$. On one hand, this implies that $\bar x\in \bigcup_{i\neq i_0} C_i$. On the other hand, combining Claim~1 with the fact that the sets $\Omega_1,\ldots,\Omega_r$ are pairwise distinct, we get $\{x^k\}_{k\geq k_1}\subset D_{i_0}$, where the closed set  $D_{i_0}$ is defined by $D_{i_0}:=\mathbb R^n\setminus \left(\bigcup_{i\neq i_0}\Omega_i\right)$. Since $\bar x=\lim\limits_{k'\to\infty}x^{k'}\in D_{i_0}$ and $D_{i_0}\cap  \left(\bigcup_{i\neq i_0} C_i\right)=\emptyset$, we have arrived at a contradiction. Thus,~\eqref{distance} is valid.
 
 From~\eqref{distance} and the continuity of the function $d\big(.,C_{i_0}\big)$ one can easily deduce that any cluster point of $\{x^k\}$ must belong to $C_{i_0}$. Now, the second assertion of the theorem is an immediate consequence of the property~(c) in Lemma~\ref{Lemma 3}. 
 
 The proof is complete.	
\end{proof}

The next useful example helps us to better understand the qualification condition~\eqref{QC} and the assertions of the above two theorems.  

\begin{example} \rm
	Consider problem~\eqref{QP problem} with $n=2, m = 3$,\\  $$Q=\begin{bmatrix} 
		2&0 \\ 0&-2
	\end{bmatrix},\ \; A=\begin{bmatrix}
		1&-1 \\ 
		1&1\\
		1&0
	\end{bmatrix},\ \; q=\begin{pmatrix}
		0\\
		0 
	\end{pmatrix},\ \; b=\begin{pmatrix}
		0\\	
		0\\
		\dfrac{1}{4}
	\end{pmatrix}.$$	
	With these data, the IQP becomes
	\begin{eqnarray}\label{QP_example}
		\min\left\{x_1^2-x_2^2 \mid x=(x_1,x_2)\in\mathbb{R}^2,\; x\in C\right\}, \end{eqnarray}
	where $C=\left\{x=(x_1,x_2)\in \mathbb{R}^2 \mid x_1 \geq |x_2|, \; x_1\geq \dfrac{1}{4}\right\}.$
	\begin{figure}[!ht]
		\centering
		\includegraphics[scale=0.5]{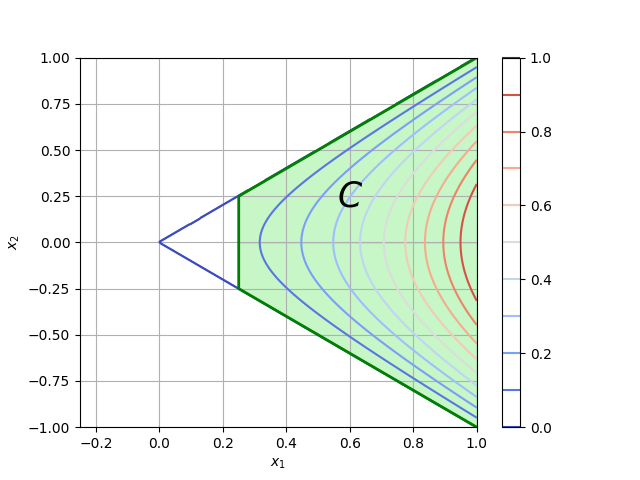} 
		\caption{The constraint set $C$ and the contour of the objective function}\label{fig1}
	\end{figure}
	
	The pseudo-faces of $C$ are
	\begin{equation*}
		\begin{array}{ll}
			\mathcal{F}_1&= \left\{x=(x_1,x_2)\in\mathbb{R}^2 \mid x_1-x_2= 0,\; x_1 > \dfrac{1}{4}\right\},\\
			\mathcal{F}_2&= \left\{x=(x_1,x_2)\in\mathbb{R}^2 \mid x_1+x_2= 0,\; x_1 > \dfrac{1}{4}\right\},\\
			\mathcal{F}_3&= \left\{x=(x_1,x_2)\in\mathbb{R}^2 \mid x_1 = \dfrac{1}{4},\; x_1> |x_2|,\right\},\\
			\mathcal{F}_4&= \left\{x=(x_1,x_2)\in\mathbb{R}^2 \mid x_1-x_2 > 0,\; x_1+x_2>0,\; x_1 > \dfrac{1}{4}\right\}.
		\end{array}
	\end{equation*}
     The faces corresponding to the above pseudo-faces respectively are
	\begin{equation*}
		\begin{array}{ll}
			F_1&=\left\{x=(x_1,x_2)\in\mathbb{R}^2 \mid x_1-x_2= 0,\; x_1\geq \dfrac{1}{4}\right\},\\
			F_2&=\left\{x=(x_1,x_2)\in\mathbb{R}^2 \mid x_1+x_2= 0,\; x_1\geq \dfrac{1}{4}\right\},\\
			F_3&= \left\{x=(x_1,x_2)\in\mathbb{R}^2 \mid x_1 = \dfrac{1}{4},\; x_1\geq |x_2|\right\},\\
			F_4&= C.
		\end{array}
	\end{equation*}
	It is easy to verify that ${\mathcal S}_1={\mathcal S}=F_1\cup F_2$, meanwhile $${\mathcal S}_3={\mathcal S}_2=C_*=F_1\cup F_2\cup \Big\{\big(\dfrac{1}{4},0\big)\Big\}.$$  
	
	Now, let us analyze algorithm~\textbf{InDCA1}. Since $\lambda_1(Q)=-2$ and $\lambda_2(Q)=2$, one can choose $\eta =3$ and $\gamma =\dfrac{1}{3}$. Then, formula~\eqref{iteration_InDCA1} becomes
	\begin{equation*}
		x^{k+1}=P_C\left(\Big(1+\frac{1}{6}\Big)x^k-\frac{1}{6}x^{k-1}-\frac{1}{3}\Big(\begin{bmatrix}
			2 & 0 \\ 
			0 & -2
		\end{bmatrix}x^k\Big)\right)
	\end{equation*}
	or, equivalently,
	\begin{equation} \label{iteration_InDCA1_example}
		\begin{pmatrix}
			x^{k+1}_1 \\ 
			x^{k+1}_2
		\end{pmatrix} = P_C \left(\begin{pmatrix}
			\dfrac{1}{9}\left(4x^k_1-x^{k-1}_1\right) \\[0.75em]
			\dfrac{1}{9}\left(16x^k_2-x^{k-1}_2\right)
		\end{pmatrix}\right).
	\end{equation}
	Consider the following cases.
	
	\noindent\textbf{Case 1:} Take any $x^0\in F_1$. So, $ x^{-1}=x^{0} = \left(\dfrac{1}{4} +t, \dfrac{1}{4}+t\right)$ for some fixed $t\geq0$. 	By~\eqref{iteration_InDCA1_example}, we have
	\begin{equation*}
		x^1= P_C \left(\begin{pmatrix}
			\dfrac{1}{12}+\dfrac{1}{3}t \\[0.75em]
			\dfrac{5}{12}+\dfrac{5}{3}t
		\end{pmatrix}\right)= \begin{pmatrix}
			\dfrac{1}{4} +t\\[0.75em]
			\dfrac{1}{4} +t
		\end{pmatrix}.
	\end{equation*}
	Thus, if the initial point belongs to $F_1$, then~\textbf{InDCA1} stops after 1 step and $x^1$ is a global solution of~\eqref{QP_example}. By symmetry, for any $x^0\in F_2$, the algorithm stops after 1 step yielding a global solution.
	
	\noindent\textbf{Case 2:} Let $x^0= \left(\dfrac{1}{4},0\right) \in \mathcal{F}_3$. It follows from~\eqref{iteration_InDCA1_example} that 
	$$x^1=P_C \left(\begin{pmatrix}
		\dfrac{1}{12}\\[0.75em]
		0
	\end{pmatrix}\right)= \begin{pmatrix}
		\dfrac{1}{4} \\[0.75em]
		0
	\end{pmatrix}.$$
	Then, algorithm~\textbf{InDCA1} stops after 1 step with $x^1$ being a KKT point, which is not a local solution.
	
	\noindent\textbf{Case 3:} Choose $x^{0} = \left(\dfrac{1}{4}, \dfrac{1}{8}\right) \in \mathcal{F}_3$. 	It follows from~\eqref{iteration_InDCA1_example} that $x^{1}= P_C \left((\frac{1}{12}, \frac{5}{24})\right).$			Since $\frac{1}{12}<\frac{1}{4}$ and $\frac{5}{24}< \frac{1}{4}$, the metric projection of $(\frac{1}{12}, \frac{5}{24})$ on $C$ must lie on the face $F_3$. Hence, 
	$x^1=\left(\frac{1}{4}, \frac{5}{24}\right).$
	By~\eqref{iteration_InDCA1_example} we then get $x^2= P_C\left(\frac{1}{12},\frac{77}{216}\right).$ Thus, $x^2\in F_1$ as $\frac{1}{12} >0$ and $\frac{77}{216}>\frac{1}{4}$. To prove that  $x^2=\left(\frac{1}{4}, \frac{1}{4}\right)$, we use the characterization of the metric projection in~\cite[Chapter~I, Theorem~2.3]{KS1980}. Since the scalar product of the vectors $\left(\frac{1}{12}-\frac{1}{4}, \frac{77}{216}-\frac{1}{4}\right)$ and $(1,1)$ is negative, the metric projection of $\left(\frac{1}{12},\frac{77}{216}\right)$ on $C$ is $x^2=\left(\frac{1}{4},\frac{1}{4}\right)$. Similarly, we can find
	$$x^3=\left(\frac{109}{432},\frac{109}{432}\right).$$
	Next, let us show that $x^{k+1}_1 = x^{k+1}_2 > x^k_1 = x^k_2 \geq\frac{1}{4} $ for all $k\geq 2$ by induction. Clearly, $x^3_1 = x^3_2 > x^2_1=x^2_2 =\frac{1}{4}$. Suppose that $x^{k+1}_1 = x^{k+1}_2 > x^k_1 = x^k_2 \geq\frac{1}{4} $ for some $k\geq 3$. Then, $x^k=(t_k,t_k)$ and $x^{k-1}=(t_{k-1}, t_{k-1})$ for some $t_k$ and $t_{k-1}$ with $t_k > t_{k-1}\geq \frac{1}{4}$. By~\eqref{iteration_InDCA1_example}, one has
	$$\begin{pmatrix}
		x^{k+1}_1 \\ 
		x^{k+1}_2
	\end{pmatrix} = P_C \left(\begin{pmatrix}
		\dfrac{1}{9}\left(4t_k-t_{k-1}\right) \\[0.75em]
		\dfrac{1}{9}\left(16t_k-t_{k-1}\right)
	\end{pmatrix}\right).$$
	It follows from our induction assumption that $\frac{1}{9}(4t_k-t_{k-1}) >0$, $\frac{1}{9}(16t_k-t_{k-1}) > \frac{1}{4}$, and $\frac{1}{9}(16t_k-t_{k-1}) > \frac{1}{9}(4t_k-t_{k-1})$. Hence, $x^{k+1}\in F_1$, and therefore, $x^{k+1}$ has the form $(t_{k+1}, t_{k+1})$ with $t_{k+1}\geq \frac{1}{4}$. By~\cite[Chapter~I, Theorem~2.3]{KS1980}, we have
	\begin{equation*}
		\left\langle \left(\dfrac{1}{9}(4t_k-t_{k-1}) - t_{k+1}, \dfrac{1}{9}(16t_k-t_{k-1}) -t_{k+1}\right), (1,1)\right\rangle \leq 0.
	\end{equation*}
	Thus, $t_{k+1}\geq \dfrac{10}{9}t_k -\dfrac{1}{9} t_{k-1}.$	Since $t_k > t_{k-1}$, the latter implies that $t_{k+1} > t_k$. So, $x^{k+1}_1 = x^{k+1}_2 > x^k_1 = x^k_2 $ for all $k\geq 2$ and the sequence $\{t_k\}_{k=2}^\infty$ is strictly increasing. Therefore, $x^{k+1} \in \mbox{\rm int} F_1$ for all $k\geq 3$. Hence, by the characterization of the metric projection, we obtain
	\begin{equation*}
		\left\langle \left(\dfrac{1}{9}(4t_k-t_{k-1}) - t_{k+1}, \dfrac{1}{9}(16t_k-t_{k-1}) -t_{k+1}\right), (1,1)\right\rangle = 0,\quad\forall k\geq 3.
	\end{equation*}
	So,
	\begin{equation*} 
		t_{k+1}= \dfrac{10}{9}t_k -\dfrac{1}{9} t_{k-1} = t_k+\dfrac{1}{9} (t_k-t_{k-1}),\quad\forall k\geq 3.
	\end{equation*}
	Thus, for all $k\geq 3$  one has 
	\begin{equation*}
		t_{k+1} -t_k= \dfrac{1}{9} (t_k-t_{k-1})= \dfrac{1}{9^{k-2}}(t_3-t_2).
	\end{equation*}
	It follows that
	\begin{equation*}
		\begin{array}{ll}
			t_{k+1} = t_k+\dfrac{1}{9^{k-2}} (t_3-t_2)&=t_{k-1}+(t_3-t_2)\left(\dfrac{1}{9^{k-3}}+\dfrac{1}{9^{k-2}}\right)\\ &= t_2 +(t_3-t_2)\sum\limits_{i=0}^{k-2}\dfrac{1}{9^i}\\[0.75em]
			&< t_2 +(t_3-t_2)\sum\limits_{i=0}^{\infty}\dfrac{1}{9^i}\\
			&= t_2+\dfrac{9}{8}(t_3-t_2)\\
			&=\dfrac{9}{8} t_3 - \dfrac{1}{8}t_2= \dfrac{97}{384}.
		\end{array}
	\end{equation*}
	Therefore, $\{t_k\}_{k=2}^\infty$ is bounded. Combining this with the monotonicity of $\{t_k\}_{k=2}^\infty$, we can deduce that the sequence $\{x^{k+1}\}$ converges to some point $\bar{x}$. Since $F_1$ is closed, $\bar{x}\in F_1$. Consequently, algorithm~\textbf{InDCA1} stops after a finite number of iterations at a global solution if $\mbox{\rm Tol} >0$.
	
	\begin{figure}[!ht]
		\centering
		\includegraphics[scale=0.5]{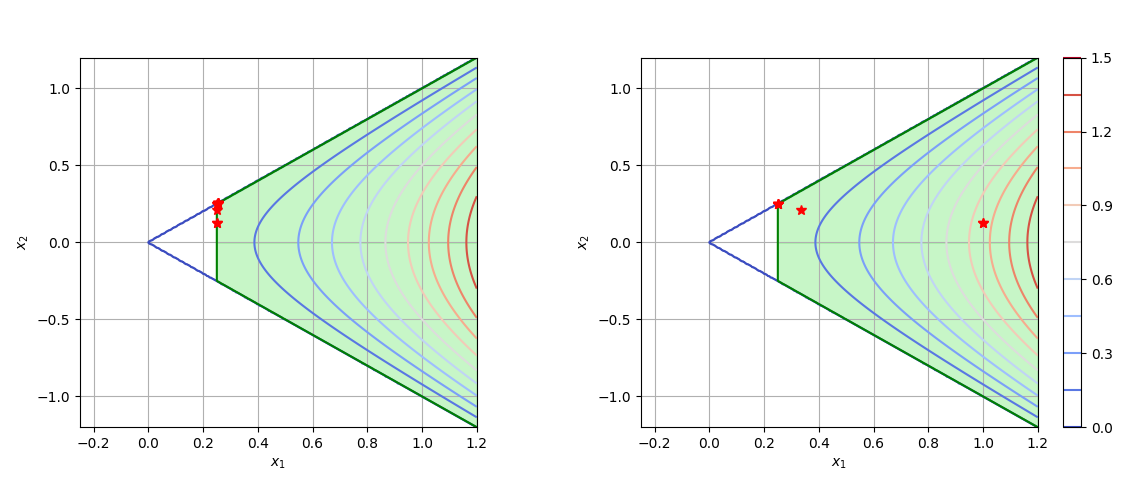} 
		\caption{Numerical simulations for Case 3 and Case 5}\label{fig2}
	\end{figure}
	
	\noindent\textbf{Case 4:} Let $x^0=(1,0)\in \mathcal{F}_4$. By~\eqref{iteration_InDCA1_example} we get $x^1=P_C \left(\frac{1}{3},0\right)=\left(\frac{1}{3},0\right)$.	It follows that $x^2= P_C\left(\frac{1}{27},0\right)=\left(\frac{1}{4},0\right)$, and hence $x^3=P_C\left(\frac{1}{27},0\right)=\left(\frac{1}{4},0\right)$. Thus, algorithm~\textbf{InDCA1} stops after 3 steps at a KKT point, which is not a local solution.
	
	\noindent\textbf{Case 5:} Choose $x^0=\left(1,\frac{1}{8}\right)\in \mathcal{F}_4$. Then, $x^1=\left(\frac{1}{3},\frac{5}{24}\right)$ and $x^2=x^3=x^4=\left(\frac{1}{4},\frac{1}{4}\right)$. So, algorithm~\textbf{InDCA1} stops after 4 steps at a global solution.
	
So, in the above 5 cases, all the iteration sequences in question are bounded. However, our IQP does not satisfies the qualification condition~\eqref{QC}. Indeed, consider the unbounded pseudo-face $\mathcal{F}_1$ and its corresponding face $F_1$. We have
	\begin{align*}
		Q\Big((0^+F_1)\setminus\{0\}\Big) & = \{(t,-t)\in\mathbb{R}^2\mid t >0\},
	\end{align*}
	and
	\begin{align*}
		N_{{\mathcal F}_1} & = \{(-t,t)\in\mathbb{R}^2\mid t \geq0\}.
	\end{align*}
	Then,
	\begin{align*}
		Q\Big((0^+F_1)\setminus\{0\}\Big) &\cap (-N_{{\mathcal F}_1} ) =\{(t,-t)\in\mathbb{R}^2\mid t >0\}.
	\end{align*}
	Therefore, the condition~\eqref{QC} does not hold.
\end{example}

The above example gives rise to a natural question: \textit{Whether the conclusions of Theorems~\ref{thm_new1} and~\ref{thm_new2} still hold without the qualification condition~\eqref{QC}?}

\section*{Acknowledgements}

The authors were supported by Hanoi University of Industry, the National Research Foundation of Korea (NRF) grant funded by the Korea government (MEST), and Vietnam Academy of Science and Technology. N.N. Thieu and N.D. Yen would like to thank the Sungkyunkwan University for supporting their one-month research stay in Suwon.

\end{document}